\documentclass[11pt,reqno]{amsart}
\usepackage[T1]{fontenc}
\usepackage[utf8]{inputenc}
\usepackage{lmodern}
\usepackage{amsmath,amssymb,amsthm,mathtools}
\usepackage{microtype}
\usepackage{enumitem}
\usepackage[margin=1.1in]{geometry}
\usepackage{hyperref}
\usepackage[capitalize,nameinlink]{cleveref}
\hypersetup{colorlinks=true,linkcolor=blue,citecolor=blue,urlcolor=blue,
 pdftitle={Weighted regularity and plurithinness in Cn},
 pdfauthor={Nurbek Kh. Narzillaev}}
\setlist[enumerate]{label=\textup{(\roman*)},leftmargin=*}
\newtheorem{theorem}{Theorem}[section]
\newtheorem{proposition}[theorem]{Proposition}
\newtheorem{lemma}[theorem]{Lemma}
\newcommand{\C}{\mathbb C}
\newcommand{\R}{\mathbb R}
\newcommand{\D}{\mathbb D}
\newcommand{\PSH}{\operatorname{PSH}}
\newcommand{\rank}{\operatorname{rank}}
\newcommand{\calL}{\mathcal L}
\newcommand{\cl}{\overline}
\newcommand{\reg}{\mathrm{reg}}
\newcommand{\ddc}{dd^c}
\newcommand{\lp}{\log^+}
\newcommand{\calF}{\mathcal F}
\newcommand{\Iso}{\operatorname{Iso}}
\newcommand{\norm}[1]{\lVert #1\rVert}

\title[Weighted regularity and plurithinness]{Weighted regularity and plurithinness in $\C^n$}
\author{Nurbek Kh. Narzillaev}
\address{National University of Uzbekistan, Tashkent, Uzbekistan}
\email{n.narzillaev@nuu.uz}
\subjclass[2020]{Primary 32U05, 32U15; Secondary 32U35, 31A15}
\keywords{Weighted extremal function, pluriregularity, plurithinness, negligible set, plurifine topology, Lelong class}
\date{}

\begin{document}
\begin{abstract}
We study weighted pluricomplex extremal functions with a fixed weight and a variable logarithmic growth parameter. We show that the regular parameters at a point form the empty set, the smallest contact parameter alone, or the whole contact ray. For continuous weights, we represent the right parameter derivative by probability measures satisfying quasi-everywhere inequalities. This gives a criterion for equality with the unweighted Green function. We construct two counterexamples to preservation of weighted regularity, using smooth plurisubharmonic weights and polynomially convex compact sets that are non-pluripolar and non-plurithin at every point. For the product example, we compute the derivative on the entire exceptional polydisc through planar Green functions and determine exactly where equality holds. The second example is globally pluriregular but has a positive right derivative. We also characterize the points where local pluriregularity fails without plurithinness, using negligible subsets and the plurifine topology, and prove preservation of regularity under local strict plurisubharmonicity of the extending weight.
\end{abstract}
\maketitle

\section{Introduction and main results}

We study how weighted regularity changes when the logarithmic growth parameter increases. This section introduces the parameter sets and states the main results: their possible forms, two counterexamples to preservation of regularity, and a characterization of non-plurithin points that are not locally pluriregular.

Let $K\subset\C^n$ be a non-pluripolar compact set, and let $\psi$ be a bounded real-valued function on $K$. For $\delta>0$, define
\begin{equation}\label{eq:intro-envelope}
 V_{\delta,K,\psi}(z)
 =\sup\{u(z):u\in\calL_\delta,\ u\le\psi\text{ on }K\},
 \qquad F_\delta=V_{\delta,K,\psi}^*.
\end{equation}
Here $\calL_\delta$ is the class of plurisubharmonic functions on $\C^n$ with logarithmic growth at most $\delta$, and the star denotes upper semicontinuous regularization. The precise definitions are recalled in \cref{sec:preliminaries}. For $\delta=1$ and $\psi=0$, we obtain the classical Siciak--Zahariuta extremal function \cite{Siciak1981}
\[
 V_K:=V_{1,K,0}.
\]
Its regularization $V_K^*$ is the pluricomplex Green function with pole at infinity. We keep $K$ and $\psi$ fixed throughout each parameter comparison.

Weighted extremal functions arise in polynomial approximation and pluripotential theory; see \cite{BloomLevenberg2003,Klimek1991,Levenberg2017}. We use the notion of weighted regularity studied by Sadullaev \cite{Sadullaev2016}, Alan \cite{Alan2019}, and Narzillaev \cite{Narzillaev2021}: at a contact parameter, $K$ is $(\delta,\psi)$-regular at $a\in K$ if
\[
 F_\delta(a)=\psi(a).
\]
This is an equality with the weight. Continuity of the weighted extremal function alone does not imply it; see \cite[Example~2.3]{Alan2019}.

Alan \cite[Open Problem~3.7]{Alan2019} asked whether $(\delta_1,\psi)$-regularity at a point implies $(\delta_2,\psi)$-regularity when $0<\delta_1<\delta_2$ and $\psi$ extends to a function in $\calL_{\delta_1}^+$. Alan uses $\alpha$ for the growth parameter; here we use $\delta$, as in \cite{Narzillaev2021}.

Narzillaev introduced the contact parameter set and the global regular parameter set. In our notation they are
\begin{align*}
 \Lambda(K,\psi)&=\{\delta>0:V_{\delta,K,\psi}|_K=\psi\},\\
 \Lambda_{\reg}(K,\psi)&=\{\delta\in\Lambda(K,\psi):F_\delta|_K=\psi\}.
\end{align*}
If $\Lambda(K,\psi)$ is nonempty, it is either $(0,\infty)$ or $[\lambda,\infty)$ for some $\lambda>0$; see \cite[Section~4, Proposition~1]{Narzillaev2021}. We also use the pointwise set
\[
 \Lambda_{\reg}(a;K,\psi)
 =\{\delta\in\Lambda(K,\psi):F_\delta(a)=\psi(a)\},\qquad a\in K.
\]
This pointwise notation is introduced here. The global set is its intersection over $a\in K$.

The interval descriptions in \cite[Section~4]{Narzillaev2021} allow nondegenerate bounded intervals of regular parameters. Our first theorem excludes this possibility.

\begin{theorem}\label{thm:main-rigidity}
Let $K$ be a non-pluripolar compact set, let $\psi$ be bounded, and suppose that $\Lambda=\Lambda(K,\psi)$ is nonempty.
\begin{enumerate}
\item If $\Lambda=[\lambda,\infty)$ with $\lambda>0$, then
\[
 \Lambda_{\reg}(a;K,\psi)
 \in\{\varnothing,\{\lambda\},[\lambda,\infty)\}
 \quad\text{for every }a\in K.
\]
The same alternatives hold for $\Lambda_{\reg}(K,\psi)$.
\item If $\Lambda=(0,\infty)$, then each of these regular parameter sets is either empty or equal to $(0,\infty)$.
\end{enumerate}
\end{theorem}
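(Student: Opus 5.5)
The plan is to prove a concavity inequality for $\delta\mapsto F_\delta$ along the contact set and combine it with monotonicity in $\delta$. Throughout, recall that for $\delta\in\Lambda$ we have $F_\delta\ge V_{\delta,K,\psi}=\psi$ on $K$. So a parameter $\delta\in\Lambda$ fails to be regular at $a$ exactly when $F_\delta(a)>\psi(a)$.

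\emph{Step 1 (monotonicity).} Since $\calL_{\delta_1}\subset\calL_{\delta_2}$ for $\delta_1<\delta_2$, we get $V_{\delta_1,K,\psi}\le V_{\delta_2,K,\psi}$, and hence $F_{\delta_1}\le F_{\delta_2}$. Consequently, if $\delta_2\in\Lambda_{\reg}(a;K,\psi)$ and $\delta_1\in\Lambda$ with $\delta_1<\delta_2$, then
\[
\psi(a)\le F_{\delta_1}(a)\le F_{\delta_2}(a)=\psi(a),
\]
so $\delta_1$ is regular at $a$. Thus $\Lambda_{\reg}(a;K,\psi)$ is an initial segment of $\Lambda$.

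\emph{Step 2 (concavity).} Let $\delta_0<\delta'$ lie in $\Lambda$, and let $\delta=s\delta_0+(1-s)\delta'$ with $s\in(0,1)$. If $u\in\calL_{\delta_0}$, $v\in\calL_{\delta'}$ and $u,v\le\psi$ on $K$, then $su+(1-s)v\in\calL_\delta$ and $su+(1-s)v\le\psi$ on $K$. Taking suprema gives
\[
V_\delta\ge sV_{\delta_0}+(1-s)V_{\delta'} .
\]
To pass to regularizations, which is the delicate point, I use the following facts. The function $w=sF_{\delta_0}+(1-s)F_{\delta'}$ is plurisubharmonic. By Bedford--Taylor, $V_{\delta_0}=F_{\delta_0}$ and $V_{\delta'}=F_{\delta'}$ outside a pluripolar set $E$. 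Hence $w\le V_\delta\le F_\delta$ on $\C^n\setminus E$. Two plurisubharmonic functions satisfying an inequality off a pluripolar set satisfy it everywhere, since $E$ has empty fine interior and psh functions are determined by their values off $E$ via $\limsup$. Therefore
\[
F_\delta\ge sF_{\delta_0}+(1-s)F_{\delta'}\quad\text{on }\C^n.
\]

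\emph{Step 3 (upward propagation).} Suppose $\delta\in\Lambda_{\reg}(a;K,\psi)$ and $\delta$ is not the smallest element of $\Lambda$. Choose $\delta_0\in\Lambda$ with $\delta_0<\delta$; in case (i) take $\delta_0=\lambda$, and in case (ii) take any $\delta_0\in(0,\delta)$. Let $\delta'>\delta$ be arbitrary. Step 2 together with $F_{\delta_0}(a)\ge\psi(a)$ gives
\[
\psi(a)=F_\delta(a)\ge s\psi(a)+(1-s)F_{\delta'}(a),
\]
so $F_{\delta'}(a)\le\psi(a)$, and $\delta'$ is regular at $a$. Combined with Step 1, a regular parameter strictly larger than $\inf\Lambda$ forces all of $\Lambda$ to be regular.

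Hence in case (i) the pointwise set is $\varnothing$, $\{\lambda\}$, or $[\lambda,\infty)$, and in case (ii) it is $\varnothing$ or $(0,\infty)$. The global set $\Lambda_{\reg}(K,\psi)$ is the intersection of the pointwise sets over $a\in K$. An intersection of sets each drawn from these alternatives again lies in the same list, which gives the statement for $\Lambda_{\reg}(K,\psi)$.

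The main obstacle is Step 2. The pointwise concavity of $V_\delta$ does not pass directly to upper regularizations, because $(su+(1-s)v)^*\le su^*+(1-s)v^*$ goes in the wrong direction. It is the negligibility of $\{V_\delta<F_\delta\}$ (Bedford--Taylor) that supplies the needed inequality.
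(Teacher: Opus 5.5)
Your proof is correct and takes essentially the same route as the paper. You obtain concavity of $\delta\mapsto F_\delta(a)$ from convex combinations of competitors, justifying the passage to regularizations via pluripolarity of $\{V_\delta<F_\delta\}$ (the paper packages this as $(sU+tW)^*=sU^*+tW^*$), and then combine it with monotonicity and the three-point concavity argument on the contact ray; the only cosmetic differences are that you get monotonicity directly from $\calL_{\delta_1}\subset\calL_{\delta_2}$ rather than from $F_\beta-F_\alpha\ge(\beta-\alpha)V_K^*\ge0$, and you work with $F_\delta(a)$ instead of the defect $d_\delta(a)$.
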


For fixed $a$, the function $\delta\mapsto F_\delta(a)-\psi(a)$ is nonnegative on $\Lambda$, concave, and nondecreasing. These three properties prove the theorem. In particular, if regularity holds at $\delta_1$ but fails at a larger parameter under the assumptions of Alan's question, then
\[
 \Lambda(K,\psi)=[\delta_1,\infty),\qquad
 \Lambda_{\reg}(a;K,\psi)=\{\delta_1\}.
\]
Thus loss of regularity is possible only when the starting parameter is the smallest contact parameter. The following construction realizes this case.

We write $\D=\{\zeta\in\C:|\zeta|<1\}$ and $\cl\D=\{\zeta\in\C:|\zeta|\le1\}$ for the open and closed unit disks. The notation $\cl\D^{\,n-1}$ means the Cartesian product of $n-1$ closed unit disks.

\begin{theorem}\label{thm:main-example}
For every integer $n\ge2$, there is a polynomially convex compact set $K_n\subset\C^n$ that is non-pluripolar and non-plurithin at every point and has the following property. For every $\tau>0$, there exists
\[
 \Psi_\tau\in C^\infty(\C^n)\cap\calL_\tau^+,
 \qquad \psi_\tau=\Psi_\tau|_{K_n},
\]
such that
\[
 \Lambda(K_n,\psi_\tau)=[\tau,\infty),\qquad
 \Lambda_{\reg}(K_n,\psi_\tau)=\{\tau\}.
\]
More precisely, put $Z_n=\{2\}\times\cl\D^{\,n-1}$. The set of locally pluriregular points of $K_n$ is $K_n\setminus Z_n$, and
\begin{equation}\label{eq:main-gap}
 F_\delta(a)-\psi_\tau(a)
 \ge\frac49(\delta-\tau)\log2>0
 \quad(a\in Z_n,\ \delta>\tau).
\end{equation}
The nonnegative $(1,1)$-form $\ddc\Psi_\tau$ has rank $n-1$ near $Z_n$.
\end{theorem}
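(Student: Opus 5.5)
The plan is to take $K_n$ to be a product with an isolated slice attached, and to build $\Psi_\tau$ from a one-variable potential in $z_1$ plus a term in $w=(z_2,\dots,z_n)$. Concretely, I would set
\[
 K_n=(\cl\D\cup\{2\})\times\cl\D^{\,n-1}.
\]
This is a product of polynomially convex compact sets, hence polynomially convex, and it contains $\cl\D^{\,n}$, hence it is non-pluripolar. At points of $\cl\D^{\,n}$ local pluriregularity and non-plurithinness are classical. At $a=(2,w)\in Z_n$ I would argue as follows.
\begin{itemize}
\item Near $a$, $K_n$ coincides with the pluripolar slice $\{2\}\times\cl\D^{\,n-1}$, so the local extremal function of $K_n\cap B(a,r)$ is $+\infty$ at interior points. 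Hence $a$ is not locally pluriregular.
\item If $u$ is plurisubharmonic near $a$, then $u(2,\cdot)$ is subharmonic or $\equiv-\infty$ in each complex direction inside the slice. The submean value property then gives $\limsup_{z\to a,\,z\in K_n\setminus\{a\}}u(z)\ge u(a)$, so $K_n$ is not plurithin at $a$. For boundary points of the polydisc factor I would use a complex disc through $a$ lying in $\{2\}\times\cl\D^{\,n-1}$ in a suitable direction.
\end{itemize}
This settles the qualitative statements and identifies the non-regular locus as $Z_n$.

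Next I would construct $\Psi_\tau=\tau\,\Phi$, reducing to $\tau=1$ by homogeneity. Here $\Phi\in C^\infty\cap\calL_1^+$ has the form $\Phi(z)=\chi\bigl(h(z_1),\tfrac12\log(1+|w|^2)\bigr)$. The function $h$ is a smooth subharmonic function equal to $\log|z_1|$ on $\{|z_1|\ge 3/2\}$ (so $h(2)=\log 2$), and $\chi$ is a smooth convex, coordinatewise nondecreasing regularized maximum with $\chi(s,t)=s$ whenever $t\le s-c$. The constant $c$ is chosen so that near $Z_n$ only the first argument matters, up to a small $w$-dependent correction that makes $\ddc\Phi$ of rank exactly $n-1$ there. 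The main steps, in order, are these.
\begin{enumerate}
\item Contact for $\delta\ge\tau$. This is immediate, since $\Psi_\tau\in\calL_\tau\subset\calL_\delta$.
\item Failure of contact for $\delta<\tau$. Any $u\in\calL_\delta$ with $u\le\psi_\tau$ on $\cl\D^{\,n}$ satisfies, on the slice $w=\mathrm{const}$, $u(2,w)\le \max_{\cl\D^{\,n}}\psi_\tau+\delta\log 2$. This follows from the one-variable Bernstein--Walsh inequality. With $\psi_\tau$ normalized so that $\psi_\tau(2,w)-\max_{|z_1|\le1}\psi_\tau(\cdot,w)=\tau\log 2$, this quantity is less than $\psi_\tau(2,w)$. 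Together with \cite[Section~4, Proposition~1]{Narzillaev2021} this gives $\Lambda=[\tau,\infty)$.
\item Regularity at $\delta=\tau$. For $u\in\calL_\tau$ with $u\le\psi_\tau$ on $K_n$, I would show that $z_1\mapsto u(z_1,w)-\Psi_\tau(z_1,w)$ is subharmonic and bounded above on $\C\setminus\cl\D$. For this I need $\Psi_\tau(\cdot,w)$ to be harmonic there with growth exactly $\tau\log|z_1|$. Since this difference is $\le0$ on $|z_1|=1$, the maximum principle for $\C\setminus\cl\D$, where $\infty$ is a polar boundary point, gives $u\le\Psi_\tau$ at $(2,w)$. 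Hence $F_\tau=\psi_\tau$ on $Z_n$. On $\cl\D^{\,n}$, regularity comes from the local pluriregularity of the polydisc and the continuity of $\Psi_\tau$.
\item The gap \eqref{eq:main-gap}. For $\delta>\tau$ I would exhibit an explicit competitor $u=\Psi_\tau+(\delta-\tau)v$ with $v\in\calL_1$, $v\le0$ on $\cl\D^{\,n}$, and $v(2,w)\ge\frac49\log2$ for all $w\in\cl\D^{\,n-1}$. A candidate is $v=\max\bigl(\log|z_1|,\ \tfrac{2}{3}\log|z_1|+\tfrac13\log^+|w|^2\cdots\bigr)$, suitably tuned. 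Since the constraint $u\le\psi_\tau$ only involves $K_n$, the slice value at $z_1=2$ is free apart from the growth bound, so the weight $\frac49$ is whatever the optimal tuning of this explicit competitor yields. Then $F_\delta(a)-\psi_\tau(a)\ge(\delta-\tau)v(a)$, and \cref{thm:main-rigidity} yields $\Lambda_{\reg}=\{\tau\}$.
\end{enumerate}

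The main obstacle is step (iii) together with the compatibility of the requirements on $\Psi_\tau$. For the one-variable maximum principle, $\Psi_\tau(\cdot,w)$ must be harmonic in $z_1$ on $|z_1|>1$ with growth exactly $\tau\log|z_1|$ uniformly in $w$. At the same time, $\Psi_\tau$ must lie in $\calL_\tau$ globally, be smooth, and have $\ddc\Psi_\tau$ of rank $n-1$ rather than $n$ near $Z_n$. Pure dependence on $z_1$ gives rank $1$, while adding $\tfrac12\log(1+|w|^2)$ breaks the $\calL_\tau$ growth. My first attempt would be to take $\Phi$ near $Z_n$ of the form $\log|z_1|+\varepsilon\,\rho(w)$, where $\rho$ is strictly plurisubharmonic on a neighbourhood of $\cl\D^{\,n-1}$. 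I would glue this to $\max(\log|z_1|,\log|w|)$ far away by a regularized maximum whose transition region avoids both $\{|z_1|\le 5/2,\ |w|\le 2\}$ and the slice maximum-principle region. I would then check the harmonicity in $z_1$ only on the subregion actually used, replacing the circle $|z_1|=1$ by the boundary of the region where the gluing is inactive. If this fails, I would fall back on proving (iii) via the product structure of extremal functions (Siciak's product formula) applied slice by slice.
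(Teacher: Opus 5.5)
Your proposal has a genuine gap: the compact set you choose does not satisfy the statement's hypotheses.

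Your set $K_n=(\cl\D\cup\{2\})\times\cl\D^{\,n-1}$ is not non-pluripolar at every point. For $a\in Z_n$ and $r<1$, the set $K_n\cap B(a,r)$ lies in the hyperplane $\{z_1=2\}$, so it is pluripolar. Your proof of irregularity (local extremal function $\equiv+\infty$) relies on exactly this degeneracy. The theorem is meant to exclude it: $Z_n$ should consist of non-thin irregular points (the set $S$) of a compact that is non-pluripolar everywhere.

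The missing idea is the planar set of \cref{lem:planar}. Replace $\{2\}$ by $E=\cl\D\cup\{2\}\cup\bigcup_{j}\cl D(p_j,\rho_j)$, with $p_j=2+2^{-j}$ and $\rho_j=\exp(-16\cdot2^j)$. The small disks make $E$ nonpolar near $2$. They are also small enough that $h=\sum_j2^{-j}\log|z-p_j|+2\log2$ satisfies $h(2)=0$ and $h\le-16+2\log2$ on each disk. Hence $v=(8\lp|z|+h-4\log2)/9\in\calL_1$ is $\le0$ on $E\setminus\{2\}$, and polar removal gives $V_E^*(2)\ge v(2)=\frac49\log2$. This is where the constant $\frac49$ comes from; it is not the outcome of tuning a competitor. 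The gap then follows from $V_{K_n}^*(2,w)=V_E^*(2)$ together with the bound $F_\delta-F_\tau\ge(\delta-\tau)V_{K_n}^*$ of \cref{lem:parameter-estimates}.

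Your step (iv) is also incorrect as written. The function $\Psi_\tau+(\delta-\tau)v$ with $v(2,w)>0$ violates the obstacle at $(2,w)\in K_n$, so the slice $z_1=2$ is not ``free''. You have to discard the pluripolar set $Z_n$ through the quasi-everywhere envelope or pluripolar removal. That step is exactly what produces the defect.

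The weight is never actually constructed, and step (iii) has a hole. Your slicewise maximum principle controls competitors only on slices with $w\in\cl\D^{\,n-1}$. But $F_\tau$ is an upper regularization, so at $(2,w)$ with $\max_j|w_j|=1$ you also need bounds on nearby slices with $w\notin\cl\D^{\,n-1}$. In addition, a function of $z_1$ that is harmonic near $|z_1|=2$, as (iii) requires, has $\ddc$ of rank $0$ there, not $1$.

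The paper avoids all of this with the radial weight $\Phi=\chi(\log\norm z)$ of \cref{lem:smooth}, which equals $\log\norm z+b$ for $\norm z\ge1$. Since $\ddc\log\norm z\ge0$ has kernel exactly $\C z$, the rank is $n-1$ on $\{\norm z>1\}\supset Z_n$. Since $K_n\supset\cl B(0,1)$ and $V_{\cl B(0,1)}=\lp\norm z$, every competitor satisfies $u\le\tau b+\tau\lp\norm z=\Psi_\tau$ off the ball. Hence $V_{\tau,K_n,\psi_\tau}=F_\tau=\Psi_\tau$ on all of $\C^n$. This gives, in one stroke, contact for $\delta\ge\tau$, failure for $\delta<\tau$ at $(2,0,\ldots,0)$, and regularity at $\tau$ everywhere.

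This radial weight would work for your set as well. So the decisive defect of your proposal is the choice of $K_n$, not the weight.
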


This gives a negative answer to Alan's Open Problem~3.7, even under pointwise non-pluripolarity, non-plurithinness, polynomial convexity, and smoothness of the extending weight. The product geometry follows Alan's Example~3.16. Here we give an explicit weight, choose its regular contact parameter, and prove the estimate \eqref{eq:main-gap}.

The unweighted Green function is positive on the exceptional set in \cref{thm:main-example}. One might therefore ask whether global pluriregularity is enough to preserve weighted regularity. The next theorem shows that it is not.

\begin{theorem}\label{thm:global-example}
For every $n\ge2$, there is a polynomially convex compact set $K\subset\C^n$ that is non-pluripolar and non-plurithin at every point and satisfies
\[
 V_K^*|_K=0.
\]
For every $\tau>0$, there is a function $\Psi_\tau\in C^\infty(\C^n)\cap\calL_\tau^+$ such that, with $\psi_\tau=\Psi_\tau|_K$,
\[
 \Lambda(K,\psi_\tau)=[\tau,\infty),\qquad
 \Lambda_{\reg}(K,\psi_\tau)=\{\tau\}.
\]
At a fixed point $a\in K$, regularity fails for every $\delta>\tau$, and
\[
 D_\tau(a)\ge\frac{8\log3-4\log2}{9}>0=V_K^*(a).
\]
Here $D_\tau(a)$ is the right derivative of $\delta\mapsto F_\delta(a)$ at $\tau$, defined in \eqref{eq:sensitivity}.
\end{theorem}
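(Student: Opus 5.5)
The plan is to modify the product construction of \cref{thm:main-example} so that the exceptional polydisc $Z_n$ is no longer a set where $V_K^*$ is positive. We keep the mechanism that forces a positive right derivative of $\delta\mapsto F_\delta(a)$ at $\tau$. That mechanism lives in the weight: $\ddc\Psi_\tau$ is degenerate in one complex direction. In that direction, any gain in growth allowed by $\delta>\tau$ can be spent on a planar Green function of a set smaller than the projection of $K$.

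Concretely, I would take $K=K'\times\cl\D^{\,n-1}$ with $K'\subset\C$ a compact union of two closed disks, say $\cl\D$ and $\{|\zeta-3|\le1\}$. This set is polynomially convex, regular in the planar sense, non-thin at every point, and hence its product with $\cl\D^{\,n-1}$ is non-pluripolar, non-plurithin everywhere and satisfies $V_K^*|_K=0$. For the weight I would put
\[
\Psi_\tau(z)=\tau\max\nolimits_{\mathrm{reg}}\bigl(\chi(z_1),\lp\norm{z'}\bigr),
\]
a regularized maximum. Here $\chi$ is smooth and subharmonic with $\chi=0$ near $\cl\D$, $\chi>0$ of logarithmic growth $\le1$ near the second disk, and $\chi$ is chosen so that on $K$ the $z'$-term is irrelevant. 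Then $\Psi_\tau\in C^\infty\cap\calL_\tau^+$, $\psi_\tau=\Psi_\tau|_K$ is attained by a competitor of growth exactly $\tau$, so $\tau\in\Lambda$. The fact that no smaller $\delta$ works (so $\Lambda=[\tau,\infty)$) follows from comparing growth along the $z_1$-axis, where $\chi$ forces growth $\tau$ through a Liouville/Green-function argument in the plane. Regularity at $\tau$ is immediate since $\Psi_\tau$ itself is a competitor, so $F_\tau|_K=\psi_\tau$. By \cref{thm:main-rigidity} it then suffices to show $F_\delta(a)>\psi_\tau(a)$ at one point $a$ for $\delta>\tau$ to get $\Lambda_{\reg}=\{\tau\}$.

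For the lower bound, I would fix $a=(a_1,0')$ with $a_1$ in the disk where $\chi=0$. I would then exhibit, for $\delta>\tau$, an explicit competitor of the form
\[
u_\delta(z)=\tau\max\bigl(\chi(z_1)+\tfrac{\delta-\tau}{\tau}\,g(z_1),\ \lp\norm{z'}\bigr)-c(\delta-\tau).
\]
Here $g$ is the planar Green function of the disk $\{|\zeta-3|\le1\}$ alone, i.e.\ $\log|\zeta-3|$ suitably truncated. The point is that on the second piece $g\le0$, while on $\cl\D$ the slack $\chi=0<\lp\norm{z'}$ is absent. So one compensates by the constant $c$, which is computed from $\max_{\cl\D}g=\log 4$ versus the value at $a_1$, using the value $\log 3$ of $g$ at $0$ and a convex interpolation. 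Optimizing the interpolation parameter should produce the constant $(8\log3-4\log2)/9$. The factor $4/9$ matches the one in \eqref{eq:main-gap} and suggests the same optimization as in \cref{thm:main-example}.

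The main obstacle is exactly this competitor step: making $u_\delta\le\psi_\tau$ on all of $K$ while keeping $u_\delta(a)-\psi_\tau(a)$ of order $(\delta-\tau)$ with the stated constant. This requires a careful choice of $\chi$ near the gap between the two disks, so that the regularized maximum does not destroy plurisubharmonicity or the growth bound. I would first try $\chi(\zeta)=\max\nolimits_{\mathrm{reg}}(0,\log|\zeta-3|^{-1}\cdot\text{cutoff})$-type functions and verify the inequality on each disk separately. Dividing by $\delta-\tau$ and letting $\delta\downarrow\tau$ then gives the bound on $D_\tau(a)$ from \eqref{eq:sensitivity}.
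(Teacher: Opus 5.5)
There is a genuine gap, and it is structural rather than a matter of tuning $\chi$. Your compact $K=K'\times\cl\D^{\,n-1}$, with $K'$ a union of two disjoint closed disks, is locally pluriregular at every point: it is a product of regular planar pieces, as in the proof of \cref{lem:product}. By \cref{prop:local-weights}, for every bounded upper semicontinuous weight $\psi$, every $\delta\in\Lambda(K,\psi)$ and every $a\in K$,
\[
\psi(a)\le F_\delta(a)\le \sup_{K\cap\cl B(a,r)}\psi+\delta V^*_{K\cap\cl B(a,r)}(a)=\sup_{K\cap\cl B(a,r)}\psi\longrightarrow\psi(a).
\]
Hence $\Lambda_{\reg}(K,\psi_\tau)=\Lambda(K,\psi_\tau)$ and $D_\tau\equiv0$ on $K$, whatever smooth weight you choose. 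A degenerate $\ddc\Psi_\tau$ is necessary (\cref{thm:local-strict}), but it is far from sufficient.

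Your competitor step also fails on its own terms. If $u_\delta\le\psi_\tau$ on all of $K$ and $a\in K$, then $u_\delta(a)\le\psi_\tau(a)$, so no such competitor can show a positive defect at a point of $K$. The defect lives in the upper regularization. To detect it you need a competitor that lies below the obstacle only quasi-everywhere, justified by \eqref{eq:qe-envelope}, and exceeds it on a pluripolar subset of $K$ containing $a$. Relatedly, ``$\Psi_\tau$ is a competitor'' only gives $F_\tau\ge\Psi_\tau$; regularity at $\tau$ needs an upper bound such as \eqref{eq:base-identity}.

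The missing idea is a point $a\in S$ of a globally pluriregular compact. That is, $V_K^*(a)=0$ and $K$ is non-plurithin at $a$, yet $K$ is not locally pluriregular there. By \cref{thm:main-S}, some negligible $N\ni a$ must be non-plurithin at $a$ while $K\setminus N$ is plurithin at $a$. The paper builds this with the chair $K=A\cup(\cl\D\times I)\cup\Gamma$, $a=(0,3)$, and $N=\cl\D\times\{3\}$.

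Near $a$ the set $K\setminus N$ is $\cl\D\times(I\setminus\{3\})$. It is plurithin at $a$ because the intervals $I_j$ are thin at $3$. The potential $P$ turns this into $V_H^*(3)\ge Q(3)=(8\log3-4\log2)/9$ for $H=\cl\D\cup I$. That is where the constant comes from, not from an interpolation between $\log4$ and $\log3$.

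The back $\Gamma$ forces $V_K^*=0$ on the boundary circle of the base disk, except possibly at one point. The maximum principle in the slice $w=3$ then gives $V_K^*=0$ on $N$.

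Finally, the candidate $\tau b+\tau\log|w|+\eta V_H^*(w)$ is below $\psi_\tau$ on $K\setminus N$. This uses $V_H^*=0$ on the product pieces and $\Psi_\tau-\tau b-\tau\log w\ge\frac\tau2\log(26/25)$ on $\Gamma$. Then \eqref{eq:qe-envelope} gives $F_{\tau+h}(a)-\psi_\tau(a)\ge\min\{h,\kappa_\tau\}V_H^*(3)$.
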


The compact set combines a variant of Sadullaev's chair construction \cite[pp.~150--151]{Sadullaev2016} with a ball and a sequence of small product sets. The ball determines the initial weighted envelope. The small product sets ensure non-pluripolarity at every point. In two variables we obtain an exact formula for $F_{\tau+h}(a)$ on a nontrivial interval of $h$ and compute $D_\tau(a)$; see \cref{prop:global-bounds}.

We next consider Alan's Open Problem~3.17, which asks for a characterization of the remaining set in the decomposition
\begin{equation}\label{eq:RTS}
 \begin{aligned}
 R&=\{a\in K:K\text{ is locally pluriregular at }a\},\\
 T&=\{a\in K:K\text{ is plurithin at }a\},\\
 S&=K\setminus(R\cup T).
 \end{aligned}
\end{equation}
Here $K$ is assumed to be non-pluripolar at every point. Local pluriregularity implies non-plurithinness, so the three sets are disjoint. A negligible set is equivalently a pluripolar set; we recall this classical fact in \cref{sec:preliminaries}.

\begin{theorem}\label{thm:main-S}
Let $K\subset\C^n$ be compact and non-pluripolar at every point. A point $a\in K$ belongs to $S$ if and only if there is a negligible subset $N\subset K$ such that $a\in N$, $N$ is non-plurithin at $a$, and $K\setminus N$ is plurithin at $a$.
\end{theorem}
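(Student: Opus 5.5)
The proof reduces both implications to the local Siciak--Zahariuta extremal function at $a$, combined with three classical facts: the Bedford--Taylor theorem that $\{z\in E: V_E(z)<V_E^*(z)\}$ is pluripolar, the fact that pluripolar sets do not change $V^*$ (i.e.\ $V^*_{E\setminus P}=V^*_E$ for pluripolar $P$), and the fact that a finite union of sets plurithin at $a$ is plurithin at $a$. Negligible sets are pluripolar by the classical fact recalled in \cref{sec:preliminaries}. For $r>0$ put $K_r=K\cap\cl{B(a,r)}$. Recall that $K$ is locally pluriregular at $a$ precisely when $V^*_{K_r}(a)=0$ for every $r>0$.

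\emph{Necessity.} Let $a\in S$. Since $a\notin R$, there is $r>0$ with $V^*_{K_r}(a)>0$. Define
\[
N=\{z\in K\cap B(a,r): V^*_{K_r}(z)>0\}.
\]
Then $a\in N$. On $K_r$ we have $V_{K_r}=0$, so $N\subset\{V_{K_r}<V^*_{K_r}\}$, and $N$ is pluripolar by Bedford--Taylor. Hence $N$ is negligible.

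We show that $K\setminus N$ is plurithin at $a$. Near $a$, every point of $K\setminus N$ lies in $K\cap B(a,r)$ and satisfies $V^*_{K_r}=0$. The function $V^*_{K_r}$ is plurisubharmonic, so
\[
\limsup_{(K\setminus N)\ni z\to a,\ z\neq a} V^*_{K_r}(z)\le 0<V^*_{K_r}(a),
\]
which is exactly plurithinness of $K\setminus N$ at $a$. Finally, if $N$ were plurithin at $a$, then $K=N\cup(K\setminus N)$ would be plurithin at $a$ by the finite union property. This contradicts $a\notin T$, so $N$ is non-plurithin at $a$.

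\emph{Sufficiency.} Suppose $N$ is as in the statement. Since $N\subset K$ is non-plurithin at $a$, so is $K$; hence $a\notin T$. It remains to show $a\notin R$.

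Plurithinness of $K\setminus N$ at $a$ gives a ball $B(a,2r)$ and a function $u\in\PSH(B(a,2r))$. After normalizing, we may assume $u<0$ on $B(a,2r)$, $u(a)=-\eta$ with $\eta>0$, and $u\le-2\eta$ on $(K\setminus N)\cap \cl{B(a,r)}\setminus\{a\}$. We then glue $u$ into the Lelong class in the standard way. For suitable constants $c,C>0$ set
\[
W=\max\bigl(c(u+2\eta),\ \log(|z-a|/r)-C\bigr)\ \text{ on } B(a,r),\qquad W=\log(|z-a|/r)-C\ \text{ outside}.
\]
The constants $c$ and $C$ are chosen so that the second term dominates near $\partial B(a,r)$ and is nonpositive on $B(a,r)$. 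Then $W\in\calL_1$, $W\le0$ on $(K\setminus N)\cap\cl{B(a,r)}$ away from $a$, and $W(a)=c\eta>0$.

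This shows $V^*_{(K_r\setminus N)\cup\{a\}}(a)>0$. The single point $a$ is pluripolar, so this means $V^*_{K_r\setminus N}(a)>0$. Since $N$ is pluripolar, $V^*_{K_r}=V^*_{K_r\setminus N}$, and therefore $V^*_{K_r}(a)>0$. Thus $a\notin R$, and so $a\in S$.

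The main obstacle is this gluing step. One must check that the maximum is plurisubharmonic across $\partial B(a,r)$, which requires $c\,(u+2\eta)<\log(|z-a|/r)-C$ near the sphere; since $u$ is bounded above there, this is arranged by taking $c$ small. One must also check that $W\le0$ on the relevant part of $K$ and that $W(a)>0$. It also has to be confirmed that the null-set invariance $V^*_{E\setminus P}=V^*_E$ is used only for compact $E$ and pluripolar $P$, which is the setting where it is classical.
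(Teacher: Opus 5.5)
Your necessity direction is correct and is essentially the paper's argument. The paper takes $N=K\cap B(a,r)\cap\{g>g(a)/2\}$ with $g=V^*_{K_r}$. Your set $\{g>0\}$ works equally well, since both sets lie in $\{V_{K_r}<V^*_{K_r}\}$.

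\textbf{The gap.} It is in the gluing step of the sufficiency direction, which you yourself singled out, and as specified that step cannot be carried out. Suppose $\log(\norm{z-a}/r)-C\le0$ on $B(a,r)$ and this term dominates near $\partial B(a,r)$. Then $W\le0$ on every sphere $\partial B(a,r')$ with $r'<r$ close to $r$. The function $W$ is plurisubharmonic on $B(a,r)$, as a maximum of two such functions. The maximum principle then gives $W\le0$ on $B(a,r')$, contradicting $W(a)=c\eta>0$.

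Concretely, domination near the sphere would require $u<-2\eta-C/c<-2\eta$ there. But the sub-mean value inequality over spheres centered at $a$ forces $u\ge-\eta$ somewhere on each such sphere, because $u(a)=-\eta$. Taking $c$ small does not help: wherever $u\ge-2\eta$, one has $c(u+2\eta)\ge0>-C$ for every $c>0$.

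\textbf{The repair.} Let the second term be positive near the sphere, and obtain $W\le0$ on $K\setminus N$ only on a smaller ball. Your normalization gives $\sup_{\cl B(a,r)}u<0$ by upper semicontinuity. So for any $M>0$ the function
\[
 W=\max\{u,\,M\log(\norm{z-a}/r)\}\ \text{on }B(a,r),\qquad
 W=M\log(\norm{z-a}/r)\ \text{on }\C^n\setminus B(a,r)
\]
is plurisubharmonic and belongs to $\calL_M$. Choose $0<\rho<r$ with $M\log(\rho/r)\le-2\eta$. Then $v=(W+2\eta)/M$ has the following properties:
\begin{enumerate}
\item $v\in\calL_1$;
\item $v\le0$ on $(K\setminus N)\cap\cl B(a,\rho)$;
\item $v(a)=\eta/M$.
\end{enumerate}
In your notation this means $c=1/M$ and $C=-2\eta/M<0$.

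Since $a\in N$, $v$ is directly a competitor for $V_{K_\rho\setminus N}$, so no singleton removal is needed. Your intermediate claim about $(K_r\setminus N)\cup\{a\}$ does not follow from $W$ itself, because $W(a)>0$. Then \eqref{eq:polar-removal} gives $V^*_{K_\rho}(a)\ge\eta/M>0$.

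With this correction your argument is a slightly more elementary version of the paper's. The paper instead invokes the global barrier \eqref{eq:thin-barrier} from the thinness theorem. Before doing so, it notes via local non-pluripolarity that $a$ is an accumulation point of $K\setminus N$.
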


The theorem gives a characterization through negligible subsets and plurithinness, without a local extremal function in its statement. It follows from classical negligibility and pluripolar removal results. We also identify $K\setminus R$ as the largest relatively plurifine open negligible subset of $K$. Its plurifine limit points that belong to the subset are exactly the points of $S$; see \cref{prop:canonical-S}.

Two further results explain the role of the weight. If an admissible extension is strictly plurisubharmonic near $a$, then weighted regularity at $a$ is equivalent to local pluriregularity; see \cref{thm:local-strict}. At a regular parameter $\tau$, the right derivative is finite and satisfies $D_\tau(a)\ge V_K^*(a)$. Equality holds exactly when $F_\delta(a)$ is affine in $\delta$ for every $\delta\ge\tau$. For continuous weights, \cref{thm:derivative} gives a criterion for this equality using only $K$, the weight, and unweighted plurisubharmonic tests. For the product example, \cref{thm:product-derivative} computes the derivative at every point of the exceptional polydisc and shows that equality holds exactly on a specified smaller polydisc. The second example gives strict inequality even though $V_K^*(a)=0$.

Section~\ref{sec:preliminaries} recalls the required facts. Section~\ref{sec:parameters} proves \cref{thm:main-rigidity} and studies the right derivative. Sections~\ref{sec:local} and~\ref{sec:fine} treat local regularity and prove \cref{thm:main-S}. The two examples are constructed in Sections~\ref{sec:construction} and~\ref{sec:global-example}.

\section{Preliminaries}\label{sec:preliminaries}

We collect the notation and standard facts used in the proofs.

We use the normalization of \cite{Sadullaev2016}:
\[
 d^c=\frac{\partial-\bar\partial}{4i},\qquad
 \ddc=\frac{i}{2}\partial\bar\partial.
\]
For a real-valued function $u\in C^2(\Omega)$, where $\Omega\subset\C^n$ is open,
\[
 \ddc u=\frac{i}{2}\sum_{j,k=1}^n
 \frac{\partial^2u}{\partial z_j\partial\bar z_k}\,
 dz_j\wedge d\bar z_k.
\]
Thus $u$ is plurisubharmonic if and only if $\ddc u\ge0$, and strictly plurisubharmonic if and only if $\ddc u>0$, where positivity refers to $(1,1)$-forms. Write $\omega_0=\ddc\norm z^2$ for the standard positive form. For continuous functions, inequalities involving $\ddc$ are understood in the sense of currents. In particular, $\ddc\Psi\ge c\omega_0$ on $B(a,R)$ means that $\Psi-c\norm{z-a}^2$ is plurisubharmonic there.

We write $\lp t=\max\{\log t,0\}$, with $\lp0=0$, and use the Euclidean norm on $\C^n$. The open Euclidean ball with center $a$ and radius $r$ is denoted by $B(a,r)$; in one complex variable we also write $D(a,r)$ for this disk. For $\delta>0$, define
\[
 \calL_\delta=\{u\in\PSH(\C^n):u(z)\le\delta\lp\norm z+C_u\},
 \qquad
 \calL_\delta^+=\{u\in\calL_\delta:u(z)\ge\delta\lp\norm z-C_u\}.
\]
The inequalities are required on all of $\C^n$. We allow different constants in the upper and lower bounds. In particular, $\calL_\delta=\delta\calL_1$.

Throughout the paper $K$ is non-pluripolar and compact, and weights are bounded and real-valued, unless additional hypotheses are stated. For a locally bounded above function $f$, its upper semicontinuous regularization is $f^*(z)=\limsup_{\zeta\to z}f(\zeta)$. The envelope in \eqref{eq:intro-envelope} has a finite regularization in $\calL_\delta^+$ and agrees with it outside a pluripolar set. We use ``quasi-everywhere'' to mean outside a pluripolar set.

A set is pluripolar if, near each point, it is contained in the $-\infty$ locus of a plurisubharmonic function that is not identically $-\infty$. A set is negligible if it is locally contained in a set $\{u<u^*\}$, where $u$ is the supremum of a locally uniformly bounded above family of plurisubharmonic functions. Negligible sets and pluripolar sets coincide; countable unions of such sets are again negligible. These are the classical negligibility results of Bedford and Taylor; see \cite{BedfordTaylor1982} and \cite[Theorem~4.7.6 and Corollaries~4.7.9--4.7.10]{Klimek1991}.

For a bounded set $E$, not necessarily compact, $V_E$ is defined by the same unweighted supremum as above. If $P$ is pluripolar and $E\cup P$ is bounded, then
\begin{equation}\label{eq:polar-removal}
 V_{E\cup P}^*=V_E^*.
\end{equation}
We use this invariance only for regularized extremal functions; see \cite[Theorem~5.2.4 and Corollary~5.2.5]{Klimek1991}.

Put
\[
 m=\inf_K\psi,\quad M=\sup_K\psi.
\]
Directly from the definitions,
\begin{gather}
 V_{\delta,K,\psi}=\delta V_{1,K,\psi/\delta},\qquad
 V_{\delta,K,\psi+c}=V_{\delta,K,\psi}+c,\notag
\\
 \delta V_K+m\le V_{\delta,K,\psi}\le\delta V_K+M,\notag
\\
 \delta V_K^*+m\le F_\delta\le\delta V_K^*+M.\label{eq:sandwich}
\end{gather}
The star in \eqref{eq:sandwich} is essential: $V_K=0$ on $K$, whereas $V_K^*$ can be positive at points of $K$.

We use one elementary consequence of negligibility. If $U$ and $W$ are locally bounded above envelopes of plurisubharmonic functions with finite regularizations, then, for $s,t\ge0$,
\begin{equation}\label{eq:regularized-sum}
 (sU+tW)^*=sU^*+tW^*.
\end{equation}
Indeed, outside a pluripolar set the unregularized sum agrees with the plurisubharmonic function $sU^*+tW^*$. Such a function is recovered by upper regularization of its values off that set, which proves \eqref{eq:regularized-sum}. Terms with zero coefficient are omitted.

For $a\in K$, local pluriregularity means
\[
 V_{K\cap\cl B(a,r)}^*(a)=0\quad\text{for every }r>0.
\]
Local $(\delta,\psi)$-regularity is defined in the same way using the weighted envelope and the value $\psi(a)$. We restrict the obstacle to each smaller compact without changing notation. When $\delta\in\Lambda(K,\psi)$, every compact subset $L\subset K$ also has contact at that parameter:
\begin{equation}\label{eq:inherit-contact}
 V_{\delta,L,\psi}=\psi\quad\text{on }L.
\end{equation}
Indeed, on $L$ the envelope lies between $V_{\delta,K,\psi}=\psi$ and its obstacle.

For $E\Subset B$, with $B$ a ball, the relative extremal function is
\begin{equation}\label{eq:relative}
 h_{E,B}=\sup\{u\in\PSH(B):-1\le u\le0,\ u|_E\le-1\}.
\end{equation}
The lower bound $-1$ does not change its regularization. We use the standard equivalence between local pluriregularity at $a$ and the equalities
\[
 h_{K\cap\cl B(a,r),B(a,R)}^*(a)=-1\qquad(0<r<R)
\]
for sufficiently small concentric balls. This is the local relative-extremal characterization of regularity; see \cite{Klimek1991,Levenberg2017}.

We say that $K$ is non-pluripolar at every point if $K\cap B(a,r)$ is non-pluripolar for every $a\in K$ and $r>0$. This stronger local assumption will be stated whenever it is needed.

A set $E\subset\C^n$ is plurithin at $a$ if $a$ is not an accumulation point of $E\setminus\{a\}$, or if a plurisubharmonic function $u$ near $a$ satisfies
\[
 \limsup_{E\setminus\{a\}\ni z\to a}u(z)<u(a).
\]
The puncture is part of the definition. Finite unions of sets plurithin at $a$ are plurithin there. We will also use the following form of the thinness theorem: if $E$ is plurithin at an accumulation point $a$, there is $u\in\calL_1$ with
\begin{equation}\label{eq:thin-barrier}
 u(a)>-\infty,\qquad
 \lim_{E\setminus\{a\}\ni z\to a}u(z)=-\infty.
\end{equation}
See \cite[Proposition~4.8.2 and Corollary~4.8.4]{Klimek1991}.

Negligibility and plurithinness at a point are different notions. For example, $N=\{0\}\times\cl\D\subset\C^2$ is negligible, since it is contained in $\{\log|z_1|=-\infty\}$, but it is not plurithin at $(0,0)$. Indeed, restriction to the complex line $z_1=0$ and the submean inequality exclude a strict jump at the center of the disk. Conversely, plurithinness at one point does not imply negligibility: the nonpolar planar compact $E$ in \cref{lem:planar} is thin at $2$. A set that is plurithin at \emph{each of its own points} is negligible \cite[Theorem~4.8.1]{Klimek1991}, but the converse fails in several complex variables. We always specify the point when using plurithinness.

The plurifine topology $\calF$ is the smallest topology in which all plurisubharmonic functions are continuous. Finite intersections of sets $B\cap\{u>c\}$, with $u\in\PSH(B)$, form a basis. A set $E$ is plurithin at $a$ exactly when $a$ is not a plurifine limit point of $E$; equivalently, some plurifine neighborhood of $a$ misses $E\setminus\{a\}$. In particular, the complement of a plurifine open neighborhood of $a$ is plurithin at $a$. See \cite[Theorems~4.8.7 and 4.8.9, Corollary~4.8.10]{Klimek1991}.

\section{Regular parameters}\label{sec:parameters}

We first prove concavity and two-sided estimates in the growth parameter. They determine the regular parameter sets and compare the right derivative with the unweighted Green function.

\begin{lemma}\label{lem:parameter-estimates}
For fixed $K$ and $\psi$, the following assertions hold.
\begin{enumerate}
\item For $\alpha,\beta>0$ and $0\le t\le1$,
\[
 F_{(1-t)\alpha+t\beta}\ge(1-t)F_\alpha+tF_\beta.
\]
\item If $0<\alpha<\beta$, then on $\C^n$,
\begin{equation}\label{eq:two-sided-slope}
 (\beta-\alpha)V_K^*
 \le F_\beta-F_\alpha
 \le \frac{\beta-\alpha}{\alpha}(F_\alpha-m).
\end{equation}
\end{enumerate}
\end{lemma}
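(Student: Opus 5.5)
Both assertions come from combining competitors in the envelope \eqref{eq:intro-envelope} and then passing to regularizations with \eqref{eq:regularized-sum}.

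For (i), take $u\in\calL_\alpha$ and $v\in\calL_\beta$ with $u\le\psi$ and $v\le\psi$ on $K$. The function $w=(1-t)u+tv$ is plurisubharmonic and satisfies
\[
 w\le\bigl((1-t)\alpha+t\beta\bigr)\lp\norm z+C .
\]
On $K$ it obeys $w\le(1-t)\psi+t\psi=\psi$. Hence $w$ is a competitor for $V_{(1-t)\alpha+t\beta,K,\psi}$. Taking the supremum over $u$ and $v$ gives
\[
 V_{(1-t)\alpha+t\beta,K,\psi}\ge(1-t)V_{\alpha,K,\psi}+tV_{\beta,K,\psi}.
\]
We then regularize both sides. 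The left side becomes $F_{(1-t)\alpha+t\beta}$, since $f\ge g$ implies $f^*\ge g^*$. The right side becomes $(1-t)F_\alpha+tF_\beta$ by \eqref{eq:regularized-sum}, which applies because both are envelopes of plurisubharmonic families with finite regularizations. The endpoint cases $t=0,1$ are trivial.

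For the lower bound in (ii), take $u$ admissible for $V_{\alpha,K,\psi}$ and $v\in\calL_1$ with $v\le0$ on $K$. Then $u+(\beta-\alpha)v\in\calL_\beta$, and it is $\le\psi$ on $K$. So
\[
 V_{\beta,K,\psi}\ge V_{\alpha,K,\psi}+(\beta-\alpha)V_K .
\]
Regularizing with \eqref{eq:regularized-sum} gives $F_\beta\ge F_\alpha+(\beta-\alpha)V_K^*$.

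For the upper bound, first note that $m\le V_{\alpha,K,\psi}$, since the constant $m$ is a competitor. Hence $F_\alpha\ge m$ everywhere. Now take $v$ admissible for $V_{\beta,K,\psi}$ and set
\[
 u=\frac{\alpha}{\beta}v+\Bigl(1-\frac{\alpha}{\beta}\Bigr)m .
\]
This $u$ lies in $\calL_\alpha$. On $K$ it satisfies $u\le\frac{\alpha}{\beta}\psi+\bigl(1-\frac{\alpha}{\beta}\bigr)m\le\psi$, because $m\le\psi$. So $u\le V_{\alpha,K,\psi}$, that is,
\[
 v\le\frac{\beta}{\alpha}V_{\alpha,K,\psi}-\frac{\beta-\alpha}{\alpha}m .
\]
Taking the supremum over $v$ and regularizing, where scaling by positive constants and adding constants commute with ${}^*$, gives
\[
 F_\beta\le\frac{\beta}{\alpha}F_\alpha-\frac{\beta-\alpha}{\alpha}m ,
\]
which rearranges to the right inequality in \eqref{eq:two-sided-slope}. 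Equivalently, this is concavity (i) applied on $[0,\beta]$ with the value $m$ at $0$, so it can also be phrased that way.

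The only delicate point is passing from the unregularized inequalities to the regularized ones. Sums are handled by \eqref{eq:regularized-sum}, and a single envelope scaled by a positive constant or shifted by a constant is immediate. Everything else is routine.
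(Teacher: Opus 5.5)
Your proof is correct and is essentially the paper's argument. Part (i) and the lower bound in (ii) use the same combinations of competitors followed by \eqref{eq:regularized-sum}, and your map $v\mapsto\frac{\alpha}{\beta}v+\bigl(1-\frac{\alpha}{\beta}\bigr)m$ is the competitor-level form of the paper's step $V_{\beta,K,q}=\beta V_{1,K,q/\beta}\le\frac{\beta}{\alpha}V_{\alpha,K,q}$ with $q=\psi-m\ge0$.
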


\begin{proof}
If $u\in\calL_\alpha$ and $v\in\calL_\beta$ lie below $\psi$ on $K$, then $(1-t)u+tv\in\calL_{(1-t)\alpha+t\beta}$ and is again at most $\psi$ on $K$. Taking independent suprema gives concavity for the unregularized envelopes. Applying \eqref{eq:regularized-sum} proves the first assertion.

For the first inequality in \eqref{eq:two-sided-slope}, take $u\in\calL_\alpha$ with $u\le\psi$ on $K$ and $v\in\calL_1$ with $v\le0$ there. The function $u+(\beta-\alpha)v$ is admissible at parameter $\beta$. Thus
\[
 V_{\beta,K,\psi}\ge V_{\alpha,K,\psi}+(\beta-\alpha)V_K.
\]
Regularization and \eqref{eq:regularized-sum} prove the assertion.

For the upper bound, write $q=\psi-m\ge0$ on $K$. Scaling and monotonicity in the obstacle give
\[
 V_{\beta,K,q}
 =\beta V_{1,K,q/\beta}
 \le\beta V_{1,K,q/\alpha}
 =\frac\beta\alpha V_{\alpha,K,q}.
\]
After adding back $m$ and regularizing, this is the desired upper bound.

\end{proof}

Assume now that $\Lambda=\Lambda(K,\psi)$ is nonempty, and use its contact-ray description recalled in the introduction. For $a\in K$ and $\delta\in\Lambda$, define the regularity defect
\[
 d_\delta(a)=F_\delta(a)-\psi(a)\ge0.
\]

\begin{proof}[Proof of \cref{thm:main-rigidity}]
Fix $a\in K$. Contact gives $d_\delta(a)\ge0$ on $\Lambda$, and \cref{lem:parameter-estimates} shows that this function is concave and nondecreasing. Suppose that it vanishes at a parameter $\alpha$ that is not the smallest point of $\Lambda$. Choose $\eta\in\Lambda$ with $\eta<\alpha$. Since $\Lambda$ is a ray, every $\beta>\alpha$ also belongs to $\Lambda$. Write
\[
 \alpha=(1-t)\eta+t\beta,\qquad
 t=\frac{\alpha-\eta}{\beta-\eta}\in(0,1).
\]
Concavity gives
\[
 0=d_\alpha(a)\ge(1-t)d_\eta(a)+td_\beta(a)\ge0.
\]
Both terms in the sum are nonnegative and $t>0$, so $d_\beta(a)=0$. For every $\delta\in\Lambda$ with $\delta<\alpha$, monotonicity gives $0\le d_\delta(a)\le d_\alpha(a)=0$. Thus one interior zero forces vanishing on the entire contact ray. When $\Lambda=[\lambda,\infty)$, the remaining possibilities are no zero at all or a zero only at $\lambda$. When $\Lambda=(0,\infty)$, every parameter has a smaller parameter in $\Lambda$, so a single zero forces vanishing everywhere. Intersecting these pointwise possibilities over $a\in K$ gives the same alternatives for the global regular parameter set.
\end{proof}

An extension in $\calL_{\delta_1}^+$ gives contact for every $\delta\ge\delta_1$. The theorem therefore proves the endpoint assertion in Alan's question stated in the introduction. Also, if $V_K^*(a)>0$, the lower bound in \eqref{eq:two-sided-slope} makes $F_\delta(a)$ strictly increasing, so at most one parameter can be regular at $a$.

\begin{proposition}\label{prop:sensitivity}
Suppose $\tau\in\Lambda_{\reg}(a;K,\psi)$. The right derivative
\begin{equation}\label{eq:sensitivity}
 D_\tau(a)=\lim_{h\downarrow0}
 \frac{F_{\tau+h}(a)-\psi(a)}h
\end{equation}
exists and is finite. The quotient in this limit is nonincreasing in $h>0$, and
\[
 V_K^*(a)\le\frac{F_{\tau+h}(a)-\psi(a)}h
 \le D_\tau(a)\le\frac{\psi(a)-m}{\tau}.
\]
Its limit as $h\to\infty$ is $V_K^*(a)$, with the error bound
\[
 0\le\frac{F_{\tau+h}(a)-\psi(a)}h-V_K^*(a)
 \le\frac{M-m}{h}.
\]
Moreover,
\[
 \begin{aligned}
 D_\tau(a)=V_K^*(a)
 \quad\Longleftrightarrow\quad
 &F_\delta(a)=\psi(a)+(\delta-\tau)V_K^*(a)\\
 &\text{for every }\delta\ge\tau.
 \end{aligned}
\]
Regularity persists for every $\delta>\tau$ if and only if $D_\tau(a)=0$. If $D_\tau(a)>0$, regularity fails at every $\delta>\tau$, and $\tau$ is the smallest contact parameter.
\end{proposition}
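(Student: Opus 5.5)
The plan is to work entirely with the one-variable function $g(\delta)=F_\delta(a)-\psi(a)$ on $[\tau,\infty)$, which is a subset of $\Lambda$ because $\Lambda$ is a ray containing $\tau$. By \cref{lem:parameter-estimates}(i), $g$ is concave. It satisfies $g(\tau)=0$ because $\tau$ is regular at $a$, and $g\ge0$ on $\Lambda$ by contact. Concavity together with $g(\tau)=0$ makes the difference quotient $Q(h)=g(\tau+h)/h$ nonincreasing in $h>0$; this is the standard slope monotonicity of concave functions anchored at the point $\tau$. Hence the limit $D_\tau(a)=\lim_{h\downarrow0}Q(h)=\sup_{h>0}Q(h)$ exists in $(-\infty,\infty]$.

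Finiteness and the explicit bounds come from \eqref{eq:two-sided-slope}. The lower inequality with $\alpha=\tau$, $\beta=\tau+h$, combined with $F_\tau(a)=\psi(a)$, gives $Q(h)\ge V_K^*(a)$. The upper inequality gives $Q(h)\le (F_\tau(a)-m)/\tau=(\psi(a)-m)/\tau$ for every $h$. Passing to the supremum yields $D_\tau(a)\le(\psi(a)-m)/\tau$, so $D_\tau(a)$ is finite.

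For the behaviour as $h\to\infty$, I would use the sandwich \eqref{eq:sandwich}: $F_{\tau+h}(a)\le(\tau+h)V_K^*(a)+M$. This gives
\[
Q(h)-V_K^*(a)\le \frac{\tau V_K^*(a)+M-\psi(a)}{h}.
\]
That is not yet the stated constant $(M-m)/h$. To obtain the sharper constant, I would first note that $\tau V_K^*(a)\le F_\tau(a)-m=\psi(a)-m$, again from \eqref{eq:sandwich} at parameter $\tau$. Substituting, the numerator is at most $\psi(a)-m+M-\psi(a)=M-m$. The lower bound $Q(h)\ge V_K^*(a)$ gives the nonnegativity of the error. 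Since $Q$ is monotone, the limit of $Q(h)$ as $h\to\infty$ is exactly $V_K^*(a)$.

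For the equality criterion, suppose $D_\tau(a)=V_K^*(a)$. Because $Q$ is nonincreasing, lies between $V_K^*(a)$ and $D_\tau(a)$, and these two bounds coincide, $Q$ is constant and equal to $V_K^*(a)$. That is exactly the affine formula $F_\delta(a)=\psi(a)+(\delta-\tau)V_K^*(a)$. The converse is immediate by differentiating the affine formula.

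Persistence of regularity means $g\equiv0$ on $(\tau,\infty)$, which forces $Q\equiv0$ and hence $D_\tau(a)=0$. Conversely, if $D_\tau(a)=0$, then $0\le Q(h)\le D_\tau(a)=0$, so $Q\equiv0$. If $D_\tau(a)>0$, then $Q(h)>0$ for all small $h$. \cref{thm:main-rigidity} then forbids $\Lambda_{\reg}(a)$ from being the whole ray, so $\Lambda_{\reg}(a)=\{\tau\}$ with $\tau=\lambda$ the smallest contact parameter. The same conclusion also follows directly from the proof of that theorem, since an interior zero forces vanishing on the whole ray.

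The only step I expect to require care is obtaining the exact constant $M-m$ in the error bound, which is handled by the $\tau V_K^*(a)\le\psi(a)-m$ observation above. Everything else is routine concavity bookkeeping.
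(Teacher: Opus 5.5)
Your proposal is correct and follows the paper's proof essentially step for step: slope monotonicity from concavity with $g(\tau)=0$, the bounds from \eqref{eq:two-sided-slope}, the constant $M-m$ via $\tau V_K^*(a)+m\le F_\tau(a)=\psi(a)$, and the equality and persistence criteria by squeezing the quotient. The only cosmetic difference is in the case $D_\tau(a)>0$: you deduce failure at every $\delta>\tau$ from \cref{thm:main-rigidity}, whereas the paper gets it directly from monotonicity of the defect in $\delta$ and uses the theorem only for the threshold claim.
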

\begin{proof}
Since $d_\tau(a)=0$, concavity gives, for $0<h_1<h_2$,
\[
 d_{\tau+h_1}(a)\ge\frac{h_1}{h_2}d_{\tau+h_2}(a).
\]
Thus the quotient is nonincreasing. The two-sided estimate \eqref{eq:two-sided-slope} bounds it between $V_K^*(a)$ and $(\psi(a)-m)/\tau$. This proves existence and finiteness of the derivative, as well as the first displayed bounds.

For the error bound, \cref{eq:two-sided-slope,eq:sandwich} give
\[
 0\le F_{\tau+h}(a)-\psi(a)-hV_K^*(a)
 \le M+\tau V_K^*(a)-\psi(a)\le M-m.
\]
The last inequality uses $\psi(a)=F_\tau(a)\ge\tau V_K^*(a)+m$. Division by $h$ also gives the limit at infinity. If $D_\tau(a)=V_K^*(a)$, the lower and upper bounds for every quotient agree, giving the affine formula. The converse follows by differentiation.

If $D_\tau(a)=0$, nonnegativity forces every defect to vanish. If $D_\tau(a)>0$, the defect is positive for small $h>0$ and hence, by monotonicity, for every $h>0$. The contact-threshold assertion follows from \cref{thm:main-rigidity}.
\end{proof}

Thus $V_K^*(a)$ is the limiting slope and $D_\tau(a)$ is the initial slope. Global pluriregularity makes the limiting slope zero, but need not make the initial slope zero; see \cref{thm:global-example}.

\subsection{A representation by measures}

The preceding criterion uses the weighted envelopes at all later parameters. We now give a criterion that uses the original weight and tests from $\calL_1$. It also gives a formula for the right derivative. In this subsection the weights are continuous, and we abbreviate quasi-everywhere as q.e.

Fix $a\in K$. Let $\mathcal J_a(K)$ be the set of pairs $(\mu,c)$, where $\mu$ is a Borel probability measure on $K$ and $c\ge0$, with the following property:
\begin{equation}\label{eq:J}
 u(a)\le\int_K f\,d\mu+c
\end{equation}
whenever $u\in\calL_1$, $f\in C(K)$, and $u\le f$ q.e. on $K$. Here $C(K)$ denotes the real-valued continuous functions on $K$.
The set $\mathcal J_a(K)$ depends only on $K$ and $a$, not on the weight or the growth parameter. We use pairs because the constant $c$ accounts for the logarithmic growth. The q.e. condition is essential at irregular points.

\begin{theorem}\label{thm:dual}
For every $q\in C(K)$ and $\delta>0$,
\[
 V_{\delta,K,q}^*(a)
 =\min_{(\mu,c)\in\mathcal J_a(K)}
 \left(\int_K q\,d\mu+\delta c\right).
\]
In particular, $\mathcal J_a(K)$ is nonempty, and
\begin{equation}\label{eq:min-c}
 \min_{(\mu,c)\in\mathcal J_a(K)}c=V_K^*(a).
\end{equation}
\end{theorem}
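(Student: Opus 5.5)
The plan is to prove the two inequalities separately. The inequality ``$\le$'' is a direct verification. The inequality ``$\ge$'' is a supporting-hyperplane (Hahn--Banach) argument for a superlinear functional on $C(K)\times(0,\infty)$, followed by Riesz representation.

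\emph{Easy direction.} Let $(\mu,c)\in\mathcal J_a(K)$ and put $U=V_{\delta,K,q}^*$. By the preliminaries, $U$ is plurisubharmonic, $U/\delta\in\calL_1$, and $U=V_{\delta,K,q}\le q$ outside a pluripolar set. Hence $U/\delta\le q/\delta$ q.e. on $K$. Applying \eqref{eq:J} with $u=U/\delta$ and $f=q/\delta$ gives $U(a)\le\int_K q\,d\mu+\delta c$. So $V^*_{\delta,K,q}(a)$ is at most the infimum over $\mathcal J_a(K)$, and it remains to produce a pair attaining it.

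\emph{Hard direction.} Define $\Phi(f,\delta)=V^*_{\delta,K,f}(a)$ for $f\in C(K)$ and $\delta>0$. I will check four properties of $\Phi$.
\begin{enumerate}
\item It is positively homogeneous, since $V_{s\delta,K,sf}=sV_{\delta,K,f}$.
\item It is superadditive. If $u$ is admissible for $(f,\alpha)$ and $v$ for $(g,\beta)$, then $u+v$ is admissible for $(f+g,\alpha+\beta)$; then use \eqref{eq:regularized-sum}, exactly as in \cref{lem:parameter-estimates}.
\item It is nondecreasing in $f$, and $\Phi(f+t,\delta)=\Phi(f,\delta)+t$.
\item It is $1$-Lipschitz in $f$ for the sup norm, and concave (hence continuous) in $\delta$.
\end{enumerate}
Thus the hypograph $\{(f,\delta,s):\delta>0,\ s\le\Phi(f,\delta)\}$ is a convex cone in $C(K)\times\R\times\R$ with nonempty interior. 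Fix $(q,\delta)$. The separation theorem gives a continuous linear functional $\ell$ on $C(K)$ and a constant $c$ such that
\[
 \Phi(f,\eta)\le\ell(f)+c\eta\quad\text{for all }f\in C(K),\ \eta>0,
\]
with equality at $(q,\delta)$. Homogeneity is what makes the separating hyperplane pass through the origin, so there is no constant term. Monotonicity in $f$ forces $\ell\ge0$, and the translation rule forces $\ell(1)=1$. Hence $\ell=\int\cdot\,d\mu$ for a Borel probability measure $\mu$ on $K$ by Riesz. Taking $f=0$ gives $c\eta\ge\eta V_K^*(a)\ge0$, so $c\ge0$.

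\emph{Checking that $(\mu,c)\in\mathcal J_a(K)$.} Take $u\in\calL_1$ and $f\in C(K)$ with $u\le f$ off a pluripolar set $P\cap K$. I need $u(a)\le\Phi(f,1)$; then $u(a)\le\int f\,d\mu+c$ follows. This is the step I expect to be the main obstacle, because the envelope only sees genuine inequalities on $K$.

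My first approach is a weighted version of \eqref{eq:polar-removal}. By Josefson's theorem, choose $w\in\calL_1$ with $w=-\infty$ on $P$ and $w\le0$ on $K$. Then, for $\varepsilon>0$, the function $u+\varepsilon w$ is admissible for $(f,1+\varepsilon)$. Therefore $u+\varepsilon w\le\Phi(\cdot)$, more precisely $u+\varepsilon w\le V^*_{1+\varepsilon,K,f}$ everywhere.

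If $w(a)>-\infty$, this gives $u(a)\le\Phi(f,1+\varepsilon)-\varepsilon w(a)$. Letting $\varepsilon\to0$ and using continuity of $\Phi$ in $\eta$ then gives $u(a)\le\Phi(f,1)$. If $w(a)=-\infty$ is unavoidable, I would instead use that a plurisubharmonic function equals the limsup of its values off any pluripolar set. Approaching $a$ through points $z$ where $w(z)$ stays bounded below, together with upper semicontinuity of $V^*_{1+\varepsilon,K,f}$, should give the same conclusion.

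\emph{Conclusion.} With this verification, the pair $(\mu,c)$ lies in $\mathcal J_a(K)$ and attains equality at $(q,\delta)$. Hence the minimum is achieved, and in particular $\mathcal J_a(K)\ne\varnothing$.

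For \eqref{eq:min-c}, apply the easy direction with $q=0$ and $\delta=1$: every pair satisfies $V_K^*(a)\le c$. The minimizing pair for $(q,\delta)=(0,1)$ has $c=V_K^*(a)$, so the minimum of $c$ over $\mathcal J_a(K)$ equals $V_K^*(a)$.
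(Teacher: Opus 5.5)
Your architecture is the paper's: the easy inequality by inserting the regularized envelope into \eqref{eq:J}, a Hahn--Banach supporting functional, Riesz representation, and a q.e.\ form of the envelope obtained from a logarithmic pluripolar barrier. The paper supports the finite concave functional $T(f)=V^*_{1,K,f}(a)$ on all of $C(K)$ and then rescales. Your homogeneous $\Phi(f,\eta)=\eta\,T(f/\eta)$ on the cone $C(K)\times(0,\infty)$ yields the same pair, at the small extra cost of ruling out a vertical supporting hyperplane; this works because $(q,\delta)$ is interior to the cone.

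\textbf{The gap.} It is in the step you flagged: proving $u(a)\le V^*_{1,K,f}(a)$ when $u\le f$ only off a pluripolar set $P$. Your first case, $w(a)>-\infty$, misses the situation that matters. If $u(a)>f(a)$, then $a\in P$, so every barrier has $w(a)=-\infty$. This is precisely the irregular situation the q.e.\ condition in \eqref{eq:J} is meant to capture. Your proposed fix for that case cannot work as stated. Since $w$ is upper semicontinuous, $w(a)=-\infty$ forces $w(z)\to-\infty$ as $z\to a$. So there are no points near $a$ at which $w$ stays bounded below, and for fixed $\varepsilon$ the term $-\varepsilon w(z)$ in $u(z)\le V^*_{1+\varepsilon,K,f}(z)-\varepsilon w(z)$ blows up.

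\textbf{The repair.} Take the limits in the other order.
\begin{enumerate}
\item Fix $z$ with $w(z)>-\infty$ and let $\varepsilon\downarrow0$ there. This uses that $\eta\mapsto V^*_{\eta,K,f}(z)$ is concave, hence continuous, at every point $z$ and not only at $a$. It gives $u\le V^*_{1,K,f}$ off the pluripolar set $\{w=-\infty\}$.
\item Only then pass to $a$. The value $u(a)$ is the $\limsup$ of $u(z)$ over $z\to a$ with $w(z)>-\infty$. This is at most $\limsup V^*_{1,K,f}(z)\le V^*_{1,K,f}(a)$, by upper semicontinuity of $V^*_{1,K,f}$ (not of $V^*_{1+\varepsilon,K,f}$).
\end{enumerate}
This is what the paper does when it proves its q.e.\ envelope identity. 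The paper instead uses the convex combination $(1-\varepsilon)u+\varepsilon(w-C)$ for a suitable constant $C$. That function stays in $\calL_1$ and lies below $f$ on all of $K$, so no continuity in the growth parameter is needed.
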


\begin{proof}
We first recall the q.e. form of the extremal envelope:
\begin{equation}\label{eq:qe-envelope}
 V_{\delta,K,f}^*(z)
 =\sup\{u(z):u\in\calL_\delta,\ u\le f\text{ q.e. on }K\}.
\end{equation}
This holds for every $z\in\C^n$, $f\in C(K)$, and $\delta>0$. By scaling, it is enough to prove it for $\delta=1$. The regularized extremal function itself belongs to the family on the right, since the unregularized envelope agrees with its regularization off a pluripolar set. Conversely, suppose $u\le f$ on $K\setminus P$, where $P$ is pluripolar. Choose $v\in\calL_1$ with $v=-\infty$ on $P$, and choose a constant $C$ such that $v-C\le f$ on $K$. For $0<\varepsilon<1$,
\[
 u_\varepsilon=(1-\varepsilon)u+\varepsilon(v-C)
\]
belongs to $\calL_1$ and is at most $f$ everywhere on $K$. Letting $\varepsilon\downarrow0$ off $\{v=-\infty\}$ gives $V_{1,K,f}^*\ge u$ there. The inequality holds everywhere by plurisubharmonic regularization. This proves \eqref{eq:qe-envelope}. The logarithmic-growth pluripolar barrier and the negligibility facts used here are standard; see \cite[Theorem~5.2.4 and Section~4.7]{Klimek1991} and \cite{Levenberg2017}.

Define the functional $T:C(K)\to\R$ by
\[
 T(f)=V_{1,K,f}^*(a).
\]
It is finite, monotone, concave, and satisfies
\[
 T(f+t)=T(f)+t,\qquad
 |T(f)-T(g)|\le\norm{f-g}_{C(K)}.
\]
Concavity follows directly from \eqref{eq:qe-envelope} by taking convex combinations of competitors. The other properties follow from translation and monotonicity of the obstacle.

Fix $f_0\in C(K)$. The supporting-functional form of the Hahn--Banach theorem gives a continuous linear functional $\ell$ such that
\begin{equation}\label{eq:support}
 T(f)\le T(f_0)+\ell(f-f_0),\qquad f\in C(K).
\end{equation}
Monotonicity shows that $\ell$ is positive, and the translation identity gives $\ell(1)=1$. By the Riesz representation theorem, $\ell(f)=\int_K f\,d\mu$ for a Borel probability measure $\mu$ on $K$. Set
\[
 c=T(f_0)-\int_K f_0\,d\mu.
\]
Taking $f=0$ in \eqref{eq:support} gives $c\ge T(0)=V_K^*(a)\ge0$. If $u\le f$ q.e. on $K$, then \eqref{eq:qe-envelope} and \eqref{eq:support} give
\[
 u(a)\le T(f)\le\int_K f\,d\mu+c.
\]
Thus $(\mu,c)\in\mathcal J_a(K)$, and $T(f_0)=\int_K f_0\,d\mu+c$.

For an arbitrary pair in $\mathcal J_a(K)$, apply \eqref{eq:J} to
$u=V_{1,K,f_0}^*$ and $f=f_0$. It gives the opposite inequality. Therefore
\[
 T(f_0)=\min_{(\mu,c)\in\mathcal J_a(K)}
 \left(\int_K f_0\,d\mu+c\right).
\]
Now use $V_{\delta,K,q}^*=\delta V_{1,K,q/\delta}^*$. The case $q=0$, $\delta=1$ proves \eqref{eq:min-c}.
\end{proof}

\begin{theorem}\label{thm:derivative}
Let $\psi\in C(K)$ and $\tau\in\Lambda_{\reg}(a;K,\psi)$. Then
\begin{equation}\label{eq:derivative}
 D_\tau(a)=\min\left\{c:(\mu,c)\in\mathcal J_a(K),\quad
 \int_K\psi\,d\mu+\tau c=\psi(a)\right\}.
\end{equation}
In particular, $D_\tau(a)=V_K^*(a)$ if and only if there is a probability measure $\mu$ on $K$ such that
\[
 (\mu,V_K^*(a))\in\mathcal J_a(K),\qquad
 \int_K\psi\,d\mu=\psi(a)-\tau V_K^*(a).
\]
This criterion uses only $K$, the weight $\psi$, and unweighted tests from $\calL_1$. In particular, it applies when $\psi=\Psi|_K$ with $\Psi\in\calL_\tau^+\cap C(\C^n)$ and $F_\tau(a)=\psi(a)$. No assumption of strict plurisubharmonicity is needed.
\end{theorem}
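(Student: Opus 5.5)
The formula \eqref{eq:derivative} will follow from \cref{thm:dual} and concavity of the minimum. By \cref{thm:dual} with $q=\psi$, for every $\delta>0$,
\[
 F_\delta(a)=\min_{(\mu,c)\in\mathcal J_a(K)}\Bigl(\int_K\psi\,d\mu+\delta c\Bigr)=:\min_{(\mu,c)}g_{\mu,c}(\delta),
\]
so $\delta\mapsto F_\delta(a)$ is an infimum of affine functions of $\delta$. Let $A$ be the set of pairs attaining the minimum at $\delta=\tau$, that is, those with $\int_K\psi\,d\mu+\tau c=\psi(a)$. Regularity at $\tau$ makes this exactly the constraint set in \eqref{eq:derivative}, and $A$ is nonempty by the minimum in \cref{thm:dual}.

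For the upper bound, take any $(\mu,c)\in A$. Then $F_{\tau+h}(a)\le\psi(a)+hc$, so the difference quotient is at most $c$. Hence $D_\tau(a)\le\inf_A c$.

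For the lower bound, which I expect to be the main step, I use compactness.
\begin{enumerate}
\item For each small $h>0$, choose a minimizer $(\mu_h,c_h)$ at $\tau+h$.
\item Bound $c_h$. Since $\int_K\psi\,d\mu_h\ge m$ and the minimum value is at most $\psi(a)+hD'$, we get $c_h\le(\psi(a)+hD'-m)/(\tau+h)$, so the $c_h$ are bounded.
\item Pass to a subsequence with $\mu_h\to\mu$ weak-$*$ and $c_h\to c$.
\item Check $(\mu,c)\in\mathcal J_a(K)$. The set $\mathcal J_a(K)$ is weak-$*$ closed, because \eqref{eq:J} is preserved under limits for each fixed continuous $f$.
\item Check $(\mu,c)\in A$. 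By continuity of $\psi$, $\int\psi\,d\mu+\tau c=\lim\bigl(F_{\tau+h}(a)-hc_h\bigr)=\psi(a)$. This is where continuity of the weight is used.
\end{enumerate}
To get the slope bound, note that $(\mu_h,c_h)$ is feasible at $\tau$, so $\int\psi\,d\mu_h+\tau c_h\ge\psi(a)$. Therefore
\[
 F_{\tau+h}(a)-\psi(a)=\int\psi\,d\mu_h+\tau c_h-\psi(a)+hc_h\ge hc_h,
\]
and the quotient is at least $c_h$. Letting $h\downarrow0$ gives $D_\tau(a)\ge c\ge\inf_A c$. Combined with the upper bound, the infimum is attained at this $(\mu,c)$, so it is a minimum and \eqref{eq:derivative} holds.

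For the equality criterion, every $(\mu,c)\in\mathcal J_a(K)$ has $c\ge V_K^*(a)$ by \eqref{eq:min-c}. So $D_\tau(a)=V_K^*(a)$ exactly when some pair in $A$ has $c=V_K^*(a)$, which is the stated condition $\int_K\psi\,d\mu=\psi(a)-\tau V_K^*(a)$.

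For the final remark, $\Psi\in\calL_\tau^+$ is itself a competitor, so $\tau\in\Lambda(K,\psi)$. Together with $F_\tau(a)=\psi(a)$, this places $\tau$ in $\Lambda_{\reg}(a;K,\psi)$, and the theorem applies.
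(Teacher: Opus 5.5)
Your proposal is correct and follows essentially the same route as the paper: the upper bound from any pair attaining the minimum at $\tau$, then minimizers $(\mu_h,c_h)$ at $\tau+h$ with $c_h\le(F_{\tau+h}(a)-\psi(a))/h$, and weak compactness plus closedness of $\mathcal J_a(K)$ to extract a limit pair that satisfies the constraint and has $c\le D_\tau(a)$. The differences are cosmetic. You get attainment of the minimum from the squeeze $\inf c\le c\le D_\tau(a)\le\inf c$, whereas the paper proves compactness of the constraint set directly. The undefined constant $D'$ in your bound on $c_h$ can be taken to be $(\psi(a)-m)/\tau$ from \cref{prop:sensitivity}, or the constant $c$ of any pair attaining the minimum at $\tau$.
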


\begin{proof}
The set $\mathcal J_a(K)$ is closed for weak convergence of probability measures and ordinary convergence of $c$: each inequality in \eqref{eq:J} is closed because $f$ is continuous. Since the probability measures on the compact set $K$ form a weakly compact space, the pairs with $c\le C$ form a compact set for every $C<\infty$.

By \cref{thm:dual}, the pairs satisfying the equality in \eqref{eq:derivative} exist. They form a compact set: if $m=\min_K\psi$, their constants satisfy
\[
 0\le c\le\frac{\psi(a)-m}{\tau}.
\]
Thus the minimum on the right of \eqref{eq:derivative} is attained. Any such pair gives
\[
 F_{\tau+h}(a)\le\int_K\psi\,d\mu+(\tau+h)c
 =\psi(a)+hc,\qquad h>0.
\]
Consequently $D_\tau(a)$ is at most this minimum. Existence and finiteness were proved in \cref{prop:sensitivity}.

For each $h>0$, choose a minimizing pair $(\mu_h,c_h)$ for $F_{\tau+h}(a)$. The representation at $\tau$ gives
\[
 \psi(a)\le\int_K\psi\,d\mu_h+\tau c_h
 =F_{\tau+h}(a)-hc_h.
\]
Hence
\[
 0\le c_h\le\frac{F_{\tau+h}(a)-\psi(a)}h.
\]
The constants are bounded as $h\downarrow0$. Along a subsequence, $(\mu_h,c_h)$ converges to $(\mu,c)\in\mathcal J_a(K)$, and
\[
 \int_K\psi\,d\mu+\tau c=\psi(a),\qquad c\le D_\tau(a).
\]
This proves the reverse inequality in \eqref{eq:derivative}. Finally, every pair in $\mathcal J_a(K)$ has $c\ge V_K^*(a)$ by \eqref{eq:min-c}; the attained minimum gives the stated equality criterion.
\end{proof}

The q.e. qualification in \eqref{eq:J} cannot be replaced by an everywhere obstacle condition. With that replacement, $(\delta_a,0)$ would always be an admissible pair, where $\delta_a$ is the unit point mass. At $a\in K$, the resulting infimum would never exceed $\psi(a)$ and would miss the regularization defect. Formula \eqref{eq:J} retains the negligible-set information that is needed for $F_\delta$.

\section{Local regularity}\label{sec:local}

We recall the relation between local weighted and unweighted regularity, and then give an estimate under a local strict plurisubharmonicity assumption. The estimate explains why strictness near the point prevents the failure in \cref{thm:main-example,thm:global-example}.

For $\psi\in C(K)$ and $\delta\in\Lambda(K,\psi)$, local $(\delta,\psi)$-regularity is equivalent to local pluriregularity; this is \cite[Theorem~4.1]{Narzillaev2021}. The related continuity questions are studied in \cite{Dieu2003}. We will use the following consequence, for which upper semicontinuity is enough.

\begin{lemma}\label{prop:local-weights}
Let $\psi$ be bounded and upper semicontinuous on $K$, and let $\delta\in\Lambda(K,\psi)$. If $a$ is locally pluriregular, then $K$ is globally and locally $(\delta,\psi)$-regular at $a$.
\end{lemma}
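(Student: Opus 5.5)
The plan is to show that $F_\delta(a)\le\psi(a)$, and the same for the local envelopes at $a$. The reverse inequality is free: it follows from contact, $V_{\delta,K,\psi}(a)=\psi(a)$, together with $F_\delta\ge V_{\delta,K,\psi}$, and by \eqref{eq:inherit-contact} also for every $L=K\cap\cl B(a,r)$. Since the global envelope is dominated by the local one, it suffices to prove the local statement: for each $r>0$, with $L=K\cap\cl B(a,r)$,
\[
 V_{\delta,L,\psi}^*(a)\le\psi(a).
\]

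First I use upper semicontinuity. Fix $\varepsilon>0$ and choose $\rho\le r$ with $\psi<\psi(a)+\varepsilon$ on $L_\rho=K\cap\cl B(a,\rho)$. Every competitor $u\in\calL_\delta$ with $u\le\psi$ on $L$ satisfies $u\le\psi(a)+\varepsilon$ on $L_\rho$. It also satisfies $u\le M$ on $L_\rho$ and is at most $\delta\lp\norm z+C_u$. Applying the unweighted comparison to $u-\psi(a)-\varepsilon$, I want
\[
 u\le\psi(a)+\varepsilon+\delta V_{L_\rho}\quad\text{on }\C^n .
\]
This holds because $(u-\psi(a)-\varepsilon)/\delta\in\calL_1$ and is $\le0$ on $L_\rho$. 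Taking the supremum over $u$ gives $V_{\delta,L,\psi}\le\psi(a)+\varepsilon+\delta V_{L_\rho}$, and regularizing gives
\[
 V_{\delta,L,\psi}^*\le\psi(a)+\varepsilon+\delta V_{L_\rho}^* .
\]

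Then I evaluate at $a$. Local pluriregularity gives $V^*_{L_\rho}(a)=0$, so $V_{\delta,L,\psi}^*(a)\le\psi(a)+\varepsilon$, and letting $\varepsilon\downarrow0$ finishes the local claim. Taking $r$ large, so that $L=K$, gives global regularity; alternatively, $F_\delta\le V_{\delta,L,\psi}^*$ because $L\subset K$ has fewer constraints.

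The only delicate point is that the argument uses upper semicontinuity of $\psi$ at $a$ only to make $\psi$ small near $a$. Contact supplies the lower bound at the point itself, so no lower semicontinuity is needed. There is no real obstacle beyond confirming that the competitors' logarithmic growth is compatible with $\calL_1$ after rescaling by $\delta$. That compatibility is immediate from $\calL_\delta=\delta\calL_1$.
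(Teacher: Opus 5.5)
Your proposal is correct and follows essentially the paper's argument. The paper bounds $\psi(a)\le F_\delta(a)\le V^*_{\delta,K_r,\psi}(a)\le M_r+\delta V^*_{K_r}(a)=M_r$, with $M_r=\sup_{K_r}\psi$, using monotonicity in the set and the sandwich estimate \eqref{eq:sandwich}. It then lets $M_r\downarrow\psi(a)$ by upper semicontinuity and treats the local case by applying the same bound to each $K_s$ together with the inherited contact \eqref{eq:inherit-contact}; your $\psi(a)+\varepsilon$ comparison simply rederives that sandwich bound by hand.
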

\begin{proof}
Put $K_r=K\cap\cl B(a,r)$ and $M_r=\sup_{K_r}\psi$. Monotonicity in the set and \eqref{eq:sandwich} give
\[
 \psi(a)\le F_\delta(a)
 \le V_{\delta,K_r,\psi}^*(a)
 \le M_r+\delta V_{K_r}^*(a)=M_r.
\]
Upper semicontinuity gives $M_r\downarrow\psi(a)$. For local weighted regularity, apply the same argument to each $K_s$, using $r<s$ and the inherited contact \eqref{eq:inherit-contact}.
\end{proof}

The next theorem needs strictness only near the point. A global extension in the prescribed growth class is still required.

\begin{theorem}\label{thm:local-strict}
Let $a\in K$, let $\Psi\in\calL_\tau\cap C(\C^n)$, and put $\psi=\Psi|_K$. Assume that, for some $R>0$ and $c>0$,
\[
 \ddc\Psi\ge c\omega_0\quad\text{on }B(a,R).
\]
For $0<r<R$ and every $\delta\ge\tau$,
\[
 F_\delta(a)-\Psi(a)
 \ge cr^2\left(1+h_{K\cap\cl B(a,r),B(a,R)}^*(a)\right).
\]
Consequently, $(\delta,\psi)$-regularity at $a$ for one $\delta\ge\tau$ is equivalent to local pluriregularity at $a$. If it holds, it holds for every $\delta\ge\tau$.
\end{theorem}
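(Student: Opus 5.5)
The plan is to build, for each competitor in the relative extremal function, an explicit competitor for the weighted envelope at parameter $\delta$. The local strict plurisubharmonicity of $\Psi$ leaves room to add a multiple of that competitor near $a$ without breaking the obstacle condition on $K$. Put $E=K\cap\cl B(a,r)$ and $B=B(a,R)$. Let $w\in\PSH(B)$ satisfy $-1\le w\le0$ and $w\le-1$ on $E$. On $B$ define
\[
 v_w(z)=\Psi(z)-c\norm{z-a}^2+cr^2\bigl(1+w(z)\bigr).
\]
By hypothesis $\Psi-c\norm{z-a}^2$ is plurisubharmonic on $B$, so $v_w\in\PSH(B)$.

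Next I compare $v_w$ with $\Psi$ in two regions.
\begin{itemize}
\item On $E$ we have $1+w\le0$, so $v_w\le\Psi-c\norm{z-a}^2\le\Psi$.
\item Wherever $\norm{z-a}>r$, $w\le0$ gives $v_w\le\Psi-c(\norm{z-a}^2-r^2)<\Psi$.
\end{itemize}
The second inequality holds in particular on a shell $\{r'<\norm{z-a}<R\}$ for any fixed $r'\in(r,R)$. Hence
\[
 u_w=\max(v_w,\Psi)\ \text{on }B,\qquad u_w=\Psi\ \text{off }B,
\]
is plurisubharmonic on $\C^n$, because it coincides with $\Psi$ near $\partial B$. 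It lies in $\calL_\tau\subset\calL_\delta$ for $\delta\ge\tau$, since $u_w$ differs from $\Psi$ only on a bounded set where both are bounded. By the two comparisons, $v_w\le\Psi$ on $K\cap B$, so $u_w=\Psi=\psi$ on $K$. Thus $u_w$ is admissible, and $V_{\delta,K,\psi}(z)\ge v_w(z)$ for all $z\in B$.

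Taking the supremum over $w$ gives $V_{\delta,K,\psi}\ge\Psi-c\norm{z-a}^2+cr^2(1+h_{E,B})$ on $B$. To reach the regularized values at $a$ itself, I take $\limsup_{z\to a}$ of both sides. Since $\Psi$ is continuous, the right-hand side becomes $\Psi(a)+cr^2(1+h_{E,B}^*(a))$, while the left-hand side is at most $F_\delta(a)$ by upper semicontinuity. This step is the only delicate point: one must regularize the inequality rather than evaluate at $a$ directly, since $h_{E,B}$ may be smaller than $h_{E,B}^*$ at $a$. It yields the displayed estimate.

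For the consequences, note first that $\Psi\in\calL_\tau$ is admissible for every $\delta\ge\tau$. Hence $V_{\delta,K,\psi}\ge\psi$ on $K$, while the reverse inequality is automatic, so $[\tau,\infty)\subset\Lambda(K,\psi)$.

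Suppose $F_\delta(a)=\psi(a)$ for some $\delta\ge\tau$. The estimate then forces $h_{K\cap\cl B(a,r),B(a,R)}^*(a)=-1$ for all $0<r<R$. The same holds for every smaller $R'\le R$, since $\ddc\Psi\ge c\omega_0$ persists on $B(a,R')$. By the local relative-extremal characterization recalled in \cref{sec:preliminaries}, $a$ is locally pluriregular.

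Conversely, suppose $a$ is locally pluriregular. Then \cref{prop:local-weights}, applied with $\psi$ continuous and each $\delta\ge\tau$ in $\Lambda(K,\psi)$, gives $(\delta,\psi)$-regularity at $a$ for every $\delta\ge\tau$. This proves both the equivalence and the persistence for all $\delta\ge\tau$.
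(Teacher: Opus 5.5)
Your proposal is correct and follows the paper's proof essentially step for step. You use the same plurisubharmonic function $\Psi-c\norm{z-a}^2+cr^2(1+w)$, glue it with $\Psi$ by a maximum, compare with $\Psi$ on $E$ and off $\cl B(a,r)$, regularize at $a$ using continuity of $\Psi$, and then invoke the relative-extremal characterization and \cref{prop:local-weights} for the equivalence and persistence. Your remarks on the strict inequality on the shell $r<\norm{z-a}<R$ (for the gluing) and on passing to smaller radii $R'\le R$ only make explicit details the paper leaves implicit.
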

\begin{proof}
Put $B=B(a,R)$, $E=K\cap\cl B(a,r)$, and take a competitor $u$ from \eqref{eq:relative}. On $B$, the function
\[
 H_u(z)=\Psi(z)-c\norm{z-a}^2+cr^2(u(z)+1)
\]
is plurisubharmonic, because
\[
 \ddc H_u=\ddc\Psi-c\omega_0+cr^2\ddc u\ge0.
\]
Since $u\le0$, we have $H_u\le\Psi$ whenever $\norm{z-a}\ge r$. Hence
\[
 W_u(z)=
 \begin{cases}
 \max\{\Psi(z),H_u(z)\},&z\in B,\\
 \Psi(z),&z\notin B
 \end{cases}
\]
is globally plurisubharmonic and belongs to $\calL_\tau$. On $E$, $u=-1$ and $H_u\le\Psi$; on $K\setminus E$ the same inequality follows from $\norm{z-a}\ge r$. Thus $W_u=\psi$ on $K$ and is a competitor for every $\delta\ge\tau$.

Taking the supremum over $u$ in a neighborhood of $a$ and then regularizing at $a$, using continuity of $\Psi$, gives the stated estimate. If the left side vanishes, all the relative extremal values equal $-1$, and the characterization recalled in \cref{sec:preliminaries} gives local pluriregularity. The converse and persistence follow from \cref{prop:local-weights}, since $[\tau,\infty)\subset\Lambda$.
\end{proof}

Under the local strictness hypothesis, initial regularity therefore gives
\[
 D_\tau(a)=V_K^*(a)=0.
\]
Indeed, persistence gives $D_\tau(a)=0$, and local pluriregularity gives $V_K^*(a)=0$. If regularity at $\tau$ fails to persist for a $C^2$ extension, then $(\ddc\Psi)^n(a)=0$: otherwise continuity of the positive form would imply strictness near $a$.

Sadullaev's global-to-local theorem assumes a globally strictly plurisubharmonic extension \cite[Theorem~2.4]{Sadullaev2016}; Alan assumes strictness on a ball containing $K$ \cite[Theorem~3.14]{Alan2019}. The proof above needs strictness only near $a$, but still requires a global extension in $\calL_\tau$. Initial regularity is essential: if $F_\tau(a)>\psi(a)$, the quotient in \eqref{eq:sensitivity} tends to $+\infty$ and does not represent a finite derivative at $\tau$.

\section{Plurithinness and negligible sets}\label{sec:fine}

Throughout this section, $K$ is compact and non-pluripolar at every point. We use the decomposition $K=R\cup T\cup S$ from \eqref{eq:RTS}. We first describe failure of local pluriregularity by removing a negligible subset. We then distinguish $T$ and $S$ in the relative plurifine topology.

\begin{lemma}\label{lem:irregular-negligible}
For $a\in K$, the following conditions are equivalent:
\begin{enumerate}
\item $a\notin R$;
\item there is a negligible set $N\subset K$ containing $a$ such that $K\setminus N$ is plurithin at $a$;
\item there is a plurifine open neighborhood $U$ of $a$ such that $K\cap U$ is negligible.
\end{enumerate}
\end{lemma}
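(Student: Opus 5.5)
The plan is to prove the cycle (i)$\Rightarrow$(iii)$\Rightarrow$(ii)$\Rightarrow$(i). It uses only the pluripolar removal identity \eqref{eq:polar-removal}, the thinness barrier \eqref{eq:thin-barrier}, and the facts recalled in \cref{sec:preliminaries}: negligible sets coincide with pluripolar sets, and plurithinness means failure to be a plurifine limit point.

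\emph{(i)$\Rightarrow$(iii).} If $a\notin R$, there is $r>0$ with $V_{K_r}^*(a)>0$, where $K_r=K\cap\cl B(a,r)$. Put
\[
 U=B(a,r)\cap\{V_{K_r}^*>0\}.
\]
Since $V_{K_r}^*$ is plurisubharmonic, $U$ is plurifine open, and it contains $a$. Every point of $K\cap U$ lies in $K_r$. There $V_{K_r}=0<V_{K_r}^*$, so
\[
 K\cap U\subset\{V_{K_r}<V_{K_r}^*\}.
\]
The right-hand side is negligible by the Bedford--Taylor results, and hence so is $K\cap U$.

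\emph{(iii)$\Rightarrow$(ii).} Set $N=K\cap U$. It is negligible and contains $a$. Moreover $K\setminus N\subset\C^n\setminus U$, and the complement of a plurifine open neighbourhood of $a$ is plurithin at $a$. Subsets of sets plurithin at $a$ are again plurithin there, so $K\setminus N$ is plurithin at $a$.

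\emph{(ii)$\Rightarrow$(i).} Suppose, for contradiction, that $a\in R$ while (ii) holds with some $N$. Put $E=K\setminus N$; then $a\notin E$. Since $N$ is pluripolar and $K_r=(E\cap\cl B(a,r))\cup(N\cap\cl B(a,r))$ is bounded, \eqref{eq:polar-removal} gives
\[
 V_{E\cap\cl B(a,r)}^*(a)=V_{K_r}^*(a)=0\qquad\text{for every }r>0.
\]
If $a$ is not an accumulation point of $E$, then $E\cap\cl B(a,r)$ is empty for small $r$. Its extremal function is then $+\infty$, which already contradicts the display. Otherwise use \eqref{eq:thin-barrier} to get $u\in\calL_1$ with $u(a)>-\infty$ and $u\to-\infty$ along $E$ at $a$. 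Subtracting a constant, we may assume $u(a)=0$. Fix $A>0$ and choose $r$ so that $u\le-A$ on $E\cap\cl B(a,r)$; this set omits $a$, so no puncture issue arises. Then $u+A\in\calL_1$ is a competitor for $V_{E\cap\cl B(a,r)}$, and therefore
\[
 V_{E\cap\cl B(a,r)}^*(a)\ge u(a)+A=A>0,
\]
a contradiction.

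The main point needing care is this last step. We must use the regularized extremal function, so that the pluripolar removal \eqref{eq:polar-removal} applies, and we must check that $a\notin E$ so that the barrier bound holds on all of $E\cap\cl B(a,r)$. Both are ensured by $a\in N$.
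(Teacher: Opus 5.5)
Your proof is correct and follows essentially the same route as the paper. Both use the plurifine neighbourhood cut out by the regularized local extremal function (the paper takes the level $c/2$ rather than $0$), the Bedford--Taylor negligibility of $\{V_{K_r}<V_{K_r}^*\}$, and the thinness barrier \eqref{eq:thin-barrier} combined with pluripolar removal \eqref{eq:polar-removal}.

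The differences are minor. You close the cycle through (iii)$\Rightarrow$(ii) instead of reading thinness off directly from $g\le c/2$. You also dispose of the non-accumulation case using $a\in R$ instead of the standing local non-pluripolarity hypothesis. You still need that hypothesis tacitly in your first step, to know that $V_{K_r}^*$ is plurisubharmonic.
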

\begin{proof}
Suppose first that $a\notin R$. Choose $r>0$ such that
\[
 g=V_{K\cap\cl B(a,r)}^*,\qquad c=g(a)>0.
\]
The local non-pluripolarity assumption ensures that $g$ is finite and plurisubharmonic. Set
\[
 U=B(a,r)\cap\{g>c/2\},\qquad N=K\cap U.
\]
Then $U$ is a plurifine open neighborhood of $a$. The unregularized extremal function is zero on $K\cap\cl B(a,r)$, so
\[
 N\subset\{V_{K\cap\cl B(a,r)}<V_{K\cap\cl B(a,r)}^*\}.
\]
Thus $N$ is negligible. Moreover, near $a$ the function $g$ is at most $c/2$ on $K\setminus N$, while $g(a)=c$. Hence $K\setminus N$ is plurithin at $a$. This proves both (ii) and (iii).

Suppose (ii) holds, and put $E=K\setminus N$. The point $a$ is an accumulation point of $E$. Otherwise some neighborhood of $a$ would meet $K$ only in the pluripolar set $N$, contrary to the local non-pluripolarity assumption. By \eqref{eq:thin-barrier}, there is $u\in\calL_1$ such that $u(a)$ is finite and $u(z)\to-\infty$ as $E\ni z\to a$. Choose $\rho>0$ so small that
\[
 u\le u(a)-1\quad\text{on }E\cap\cl B(a,\rho).
\]
The function $v=u-u(a)+1$ belongs to $\calL_1$, is nonpositive on $K\cap\cl B(a,\rho)\setminus N$, and satisfies $v(a)=1$. By \eqref{eq:polar-removal}, local pluriregularity is ruled out by the estimate
\[
 V_{K\cap\cl B(a,\rho)}^*(a)
 =V_{K\cap\cl B(a,\rho)\setminus N}^*(a)\ge1.
\]
Finally, (iii) implies (ii) with $N=K\cap U$, since $K\setminus N$ is contained in the complement of a plurifine neighborhood of $a$.
\end{proof}

\begin{proof}[Proof of \cref{thm:main-S}]
Let $a\in S$. The preceding lemma gives a negligible set $N\subset K$ containing $a$ such that $K\setminus N$ is plurithin at $a$. If $N$ were plurithin there as well, their union $K$ would be plurithin, a contradiction. Thus $N$ is non-plurithin at $a$.

Conversely, suppose such an $N$ exists. Since $N\subset K$ is non-plurithin at $a$, the set $K$ is non-plurithin there. The lemma shows that $a\notin R$. Hence $a\in S$.
\end{proof}

The criterion also describes regular points: $a\in R$ if and only if $K\setminus N$ is non-plurithin at $a$ for every negligible subset $N\subset K$. To see this, a set $N$ not containing $a$ can be replaced by $N\cup\{a\}$ without changing thinness at $a$.

We now define a negligible subset directly from $K$ and the plurifine topology:
\begin{equation}\label{eq:NK}
 N_K=\bigcup\{K\cap U:U\in\calF,\ K\cap U\text{ is negligible}\}.
\end{equation}
For any set $A$, let $A'_{\calF}$ denote its plurifine derived set:
\[
 A'_{\calF}
 =\{a:U\cap(A\setminus\{a\})\ne\varnothing
       \text{ for every plurifine neighborhood }U\text{ of }a\}.
\]
Let $\Iso_{\calF}(K)$ denote the isolated points of $K$ in its relative plurifine topology.

\begin{proposition}\label{prop:canonical-S}
The set $N_K$ is the largest relatively plurifine open negligible subset of $K$. Moreover,
\[
 R=K\setminus N_K,\qquad
 T=\Iso_{\calF}(K),\qquad
 S=N_K\cap(N_K)'_{\calF}.
\]
In particular, $S$ consists of the points of $N_K$ that are not isolated in the relative plurifine topology of $K$.
\end{proposition}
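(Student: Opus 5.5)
The plan is to read everything off \cref{lem:irregular-negligible} and the facts on plurithinness recalled in \cref{sec:preliminaries}.

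\emph{$N_K$ is the largest relatively plurifine open negligible subset.} By definition $N_K$ is a union of sets $K\cap U$ with $U\in\calF$, so it is relatively plurifine open. Every relatively plurifine open negligible subset $K\cap U$ appears in that union, so it lies in $N_K$. The delicate point is negligibility of $N_K$, since the union may be uncountable. Here I would use \cref{lem:irregular-negligible}: a point $a\in N_K$ lies in some $K\cap U$ with $U$ plurifine open and $K\cap U$ negligible. By (iii)$\Rightarrow$(i) of the lemma, $a\notin R$. Conversely, (i)$\Rightarrow$(iii) shows that every $a\notin R$ lies in such a set. Hence
\[
 N_K=K\setminus R.
\]
This already gives $R=K\setminus N_K$. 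It remains to show that $K\setminus R$ is negligible. I would invoke the quasi-Lindelöf property of the plurifine topology: an arbitrary union of plurifine open sets differs from a countable subunion by a pluripolar set. Then $N_K$ is a countable union of negligible sets together with a pluripolar set, and it is negligible by the countable-union fact recalled in \cref{sec:preliminaries}. I expect this to be the main obstacle, because the quasi-Lindelöf property (Bedford--Taylor; cf.\ \cite{Klimek1991}) is not among the results stated earlier. If it cannot be cited, I would try a local fallback: cover by countably many rational balls $B$ and show that $\{a\in K\cap B: V_{K\cap\cl B}^*(a)>0\}$ is negligible. The difficulty is that $a\notin R$ is witnessed by a ball centered at $a$, not by a fixed rational ball. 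This is exactly where some Lindelöf-type argument is needed.

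\emph{$T=\Iso_{\calF}(K)$.} By the characterization in \cref{sec:preliminaries}, $K$ is plurithin at $a$ exactly when $a$ is not a plurifine limit point of $K$, that is, $a\notin K'_{\calF}$. For $a\in K$ this says precisely that $a$ is isolated in the relative plurifine topology.

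\emph{$S=N_K\cap(N_K)'_{\calF}$.} We have $S=(K\setminus R)\setminus T=N_K\setminus \Iso_{\calF}(K)$. Let $a\in N_K$, so that $K\cap U$ is negligible for some plurifine open $U\ni a$. Any plurifine neighborhood $W$ of $a$ satisfies
\[
 W\cap(K\setminus\{a\})\neq\varnothing\iff (W\cap U)\cap(K\setminus\{a\})\neq\varnothing,
\]
after shrinking $W$ to $W\cap U$, and $K\cap U\subset N_K$. Hence $a\in K'_{\calF}$ if and only if $a\in(N_K)'_{\calF}$. The inclusion $(N_K)'_{\calF}\subset K'_{\calF}$ is trivial. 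So the points of $N_K$ that are non-isolated in $K$ are exactly the points of $N_K\cap(N_K)'_{\calF}$, and this set equals $S$. This is the relative-open analogue of the proof of \cref{thm:main-S}, and the final sentence of the proposition is a restatement of it.
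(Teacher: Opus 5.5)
Your proposal is correct. Apart from the negligibility of $N_K$, it follows the paper: $N_K=K\setminus R$ comes from \cref{lem:irregular-negligible}, $T=\Iso_{\calF}(K)$ comes from the thinness criterion, and the formula for $S$ comes from the fact that isolation in $N_K$ and isolation in $K$ agree at points of the relatively plurifine open set $N_K$. For negligibility you use the quasi-Lindel\"of property of $\calF$. That is a valid theorem, and it gives more generality. It shows that the union of all relatively plurifine open negligible subsets of an arbitrary set is negligible, with no use of compactness, local non-pluripolarity, or the identification with $K\setminus R$. The paper instead uses exactly your fallback, and the obstacle you anticipated there does not arise. If $V^*_{K\cap\cl B(a,r)}(a)>0$ and $B_j$ is a rational ball with $a\in B_j$ and $\cl B_j\subset B(a,r)$, then $A_j=K\cap\cl B_j\subset K\cap\cl B(a,r)$. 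Monotonicity of the extremal function in the set then gives $V_{A_j}^*(a)\ge V^*_{K\cap\cl B(a,r)}(a)>0$. So the witnessing ball need not be centred at $a$, because shrinking the compact only increases the extremal function. Hence $K\setminus R\subset\bigcup_j N_j$ with $N_j=K\cap B_j\cap\{V_{A_j}^*>0\}$. Each $N_j$ is negligible, since $V_{A_j}=0$ on $A_j$ places $N_j$ inside $\{V_{A_j}<V_{A_j}^*\}$. The paper also checks the reverse inclusion, which gives an explicit countable decomposition of $K\setminus R$. This route is elementary: it uses only the definition of negligibility and the countable-union fact recalled in \cref{sec:preliminaries}. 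Yours imports a deep structural theorem on the plurifine topology that the paper does not need.
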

\begin{proof}
By \cref{lem:irregular-negligible}, $N_K=K\setminus R$. We verify that this set is negligible; an arbitrary union in \eqref{eq:NK} cannot be assumed negligible without proof.

Let $B_j$ range over the balls with rational centers and positive rational radii for which $K\cap B_j\ne\varnothing$. Put
\[
 A_j=K\cap\cl B_j,\qquad
 N_j=K\cap B_j\cap\{V_{A_j}^*>0\}.
\]
Each $A_j$ is non-pluripolar, and each $N_j$ is negligible. We claim that
\[
 K\setminus R=\bigcup_{j\ge1}N_j.
\]
If $a\notin R$, choose $r$ with $V_{K\cap\cl B(a,r)}^*(a)>0$ and then a rational ball $B_j$ with $a\in B_j$ and $\cl B_j\subset B(a,r)$. Monotonicity gives $V_{A_j}^*(a)>0$, so $a\in N_j$. Conversely, if $a\in N_j$, choose $\rho>0$ with $\cl B(a,\rho)\subset B_j$. Then
\[
 V_{K\cap\cl B(a,\rho)}^*(a)\ge V_{A_j}^*(a)>0,
\]
so $a\notin R$. This proves the countable-union representation and negligibility of $N_K$.

Relative plurifine openness and maximality now follow from \eqref{eq:NK}. The thinness criterion recalled in \cref{sec:preliminaries} gives $T=\Iso_{\calF}(K)$. Every such isolated point belongs to $N_K$, since its singleton is a relatively plurifine open pluripolar set. At a point of $N_K$, isolation in $N_K$ is equivalent to isolation in $K$, because $N_K$ is relatively plurifine open. Removing these isolated points from $N_K=K\setminus R$ gives the formula for $S$.
\end{proof}

In one complex variable, polar sets are thin at every point, so \cref{thm:main-S} gives $S=\varnothing$. In higher dimension a negligible set can be non-plurithin at its points; the product in \cref{sec:construction} provides an example.

\section{A product example}\label{sec:construction}

We construct the compact set and weight in \cref{thm:main-example}. The product geometry is taken from \cite[Example~3.16]{Alan2019}. We specify a planar compact set with a quantitative Green-function estimate, choose a smooth radial weight, and then study the parameter derivative on the exceptional polydisc.

\subsection{A planar compact set}

\begin{lemma}\label{lem:planar}
Let $a=2$, $p_j=2+2^{-j}$, $\rho_j=\exp(-16\cdot2^j)$, and
\begin{equation}\label{eq:E}
 E=\cl\D\cup\{2\}\cup\bigcup_{j\ge1}\cl D(p_j,\rho_j).
\end{equation}
Then $E$ is compact, full (its complement is connected), and nonpolar at every point. Its only point that is not locally regular is $2$, the set $E$ is thin at $2$, and
\begin{equation}\label{eq:planar-gap}
 V_E^*(2)\ge\frac49\log2.
\end{equation}
\end{lemma}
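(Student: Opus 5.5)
The plan is to verify the qualitative properties directly from the geometry, and then obtain \eqref{eq:planar-gap} by exhibiting one explicit competitor $u\in\calL_1$ with $u\le0$ on $E$. Since $V_E^*\ge V_E\ge u$ pointwise, this gives $V_E^*(2)\ge u(2)$. This competitor also proves thinness at $2$, so no Wiener-type estimate is needed.

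\emph{Qualitative part.} The centers satisfy $|p_j-2|=2^{-j}$, and the radii $\rho_j=\exp(-16\cdot 2^j)$ are much smaller than the gaps $2^{-j}-2^{-j-1}=2^{-j-1}$. Hence the closed disks $\cl D(p_j,\rho_j)$ are pairwise disjoint and disjoint from $\cl\D$. They accumulate only at $2\in E$, so $E$ is closed and bounded, hence compact. The complement of $E$ is connected, because each component of $E$ is a closed disk or the point $2$, and a ray from $\infty$ can reach every gap. So $E$ is full.
\begin{itemize}
\item[] Every point $z\neq2$ of $E$ lies in a closed disk contained in $E$. A closed disk is nonpolar, and it is regular at each of its points, including boundary points, by the classical barrier for disks. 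So $z$ is locally regular, and $E$ is nonpolar near $z$.
\item[] Every neighborhood of $2$ contains some disk $\cl D(p_j,\rho_j)$, so $E$ is also nonpolar near $2$.
\end{itemize}
Once \eqref{eq:planar-gap} is proved, $2$ is not even globally regular, hence not locally regular. It remains to show that $2$ is the only such point and that $E$ is thin there.

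\emph{Key construction.} Let $\beta_j=2^{-j}$ and take weights $\alpha,\beta\ge0$ with $\alpha+\beta=1$. Define
\[
 u(z)=\alpha\log\frac{|z|}{A}+\beta\sum_{j\ge1}2^{-j}\log\frac{|z-p_j|}{4},
\]
with a constant $A\ge1$ to be fixed. This function is subharmonic and has growth $\alpha+\beta\sum2^{-j}=1$, so $u\in\calL_1$. The series converges because $\log(|z-p_j|/4)\le0$ on the relevant bounded region and is uniformly controlled at infinity.

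I would then check $u\le0$ on each piece of $E$:
\begin{itemize}
\item[] On $\cl\D$, both terms are $\le0$ as soon as $A\ge1$.
\item[] On $\cl D(p_j,\rho_j)$, the $j$-th summand is at most $\beta 2^{-j}\log(2\rho_j/4)\approx-16\beta$. The other summands are $\le0$, and the first term is at most $\alpha\log(3/A)$. So it suffices that $\alpha\log(3/A)\le 16\beta-\beta 2^{-j}\log2$ for all $j$.
\end{itemize}
At the point $2$ one computes
\[
 u(2)=\alpha\log\frac2A-\beta\sum_j 2^{-j}\bigl(j\log2+2\log2\bigr)=\alpha\log\frac 2A-4\beta\log2.
\]
The main obstacle is tuning $\alpha$, $\beta$, $A$, and possibly the normalizing constants $4$, so that all these constraints hold and $u(2)\ge\frac49\log2$. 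The number $4/9$ suggests an optimization in which $\beta$ is small, of order $1/9$. If the crude bound above falls short, I would try the following, in this order:
\begin{itemize}
\item[] replace the normalization $4$ by the sharper bound $|z-p_j|\le 3+2^{-j}$ on $\cl\D$;
\item[] exploit the factor $16$ in $\rho_j$ more carefully;
\item[] weight the masses unequally, for instance $\beta_j\propto 2^{-j}/j$.
\end{itemize}

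\emph{Thinness.} Thinness of $E$ at $2$ can be read off from $u$. The term $\log|z-p_j|$ tends to $-\infty$ on the $j$-th disk, but only boundedly. So instead I would use the Wiener criterion. For the annuli $\{2^{-k-1}\le|z-2|\le2^{-k}\}$, the relevant capacity is comparable to $\rho_k$, and
\[
 \sum_k\frac{k}{\log(1/\rho_k)}=\sum_k\frac{k}{16\cdot2^k}<\infty .
\]
Hence $E$ is thin at $2$, consistent with the positivity of $V_E^*(2)$.
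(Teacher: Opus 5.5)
\textbf{Gap: the competitor is not admissible at $2$.} You require $u\le0$ on $E$ and conclude $V_E^*\ge V_E\ge u$. But $2\in E$, and you want $u(2)\ge\frac49\log2>0$, so these requirements are incompatible. In fact $V_E(2)=0$, so no function admissible for $V_E$ can be positive at $2$. Your checks only cover $\cl\D$ and the small disks, that is, $E\setminus\{2\}$.

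The missing ingredient is that the polar point $2$ can be discarded after regularization: $V_E^*=V_{E\setminus\{2\}}^*$. This is \eqref{eq:polar-removal}, and it is exactly what the paper uses. You can also argue directly. The function $(1-\varepsilon)u+\varepsilon\log(|z-2|/3)$ lies in $\calL_1$ and is $\le0$ on all of $E$. Letting $\varepsilon\downarrow0$ gives $V_E\ge u$ on $\C\setminus\{2\}$. By subharmonicity, $V_E^*(2)\ge\limsup_{z\to2,\,z\ne2}u(z)=u(2)$. This is the heart of the lemma: a positive value at a point of $E$ exists only for the regularized function.

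With that repair, your route coincides with the paper's. The tuning you call the main obstacle is immediate. Take $A=1$, $\alpha=\frac89$, $\beta=\frac19$. Your crude constraint holds, since $\frac89\log3<\frac{16}9$, and $u(2)=\frac89\log2-\frac49\log2=\frac49\log2$. This $u$ is the paper's $v=(8\lp|z|+h-4\log2)/9$ with $\log|z|$ in place of $\lp|z|$, so none of your fallbacks is needed.

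\textbf{Thinness.} Your thinness paragraph misreads the definition. Thinness at $2$ needs only a strict upper jump, not $u\to-\infty$ along $E$. The same $u$ gives $\limsup_{E\setminus\{2\}\ni z\to2}u(z)\le0<u(2)$, which is what your opening paragraph promised. The paper argues the same way, using $h\le-16+2\log2$ on the small disks and $h(2)=0$.

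The Wiener criterion also works, but it is unnecessary, and your capacity estimate needs adjusting. Each dyadic annulus about $2$ meets pieces of two consecutive disks, so its capacity is not comparable to $\rho_k$. What survives, by subadditivity, is $\log(1/\mathrm{cap})\gtrsim2^k$, and that is enough for convergence.
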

\begin{proof}
The disks in \eqref{eq:E} are pairwise disjoint, lie to the right of $2$, and accumulate only at $2$, since $\rho_j<2^{-j-3}$. Thus $E$ is closed and bounded. Every neighborhood of $2$ contains one of these disks, and all other points belong to a disk with nonempty interior. Hence $E$ is nonpolar at every point. At points other than $2$, local regularity follows from the local regularity of a closed disk and monotonicity of the extremal function.

All disk centers lie on the real axis. A point of $\C\setminus E$ in the upper half-plane can be moved vertically upward without hitting any disk; points in the lower half-plane can similarly be moved downward. Points on the real axis outside $E$ can be moved upward. Above and below all disks, horizontal paths can be joined along the line $\operatorname{Re}z=-2$, which misses $E$. Thus $\C\setminus E$ is path connected, so $E$ is full.

Consider the logarithmic potential
\[
 h(z)=\sum_{j\ge1}2^{-j}\log|z-p_j|+2\log2.
\]
The positive parts are uniformly summable on compact sets. Subtracting a common upper bound on each compact gives a decreasing series of subharmonic functions, and its limit is not identically $-\infty$, since
\[
 h(2)=\sum_{j\ge1}2^{-j}(-j\log2)+2\log2=0.
\]
Consequently $h$ is globally subharmonic. Its growth at infinity is $\log|z|+O(1)$ because the atomic measure has total mass one and compact support.

If $z\in\cl D(p_j,\rho_j)$, then $|z-p_k|<1$ for $k\ne j$. Therefore
\begin{equation}\label{eq:h-small}
 h(z)\le2^{-j}\log\rho_j+2\log2=-16+2\log2.
\end{equation}
Since only these disks accumulate at $2$, \eqref{eq:h-small} and $h(2)=0$ prove thinness at $2$.

It remains to prove \eqref{eq:planar-gap}, which also excludes local regularity at $2$. On $\cl\D$, $|z-p_j|<4$, so $h(z)\le4\log2$. Define
\[
 v(z)=\frac{8\lp|z|+h(z)-4\log2}{9}\in\calL_1.
\]
Then $v\le0$ on $\cl\D$. On every small disk, $|z|<4$ and \eqref{eq:h-small} give
\[
 9v(z)\le14\log2-16<0.
\]
Thus $v\le0$ on $E\setminus\{2\}$, while $v(2)=\frac49\log2$. Since a singleton is polar, \eqref{eq:polar-removal} gives
\[
 V_E^*(2)=V_{E\setminus\{2\}}^*(2)\ge v(2)=\frac49\log2.\qedhere
\]
\end{proof}

\subsection{A product set}

For $n\ge2$ set
\begin{equation}\label{eq:Kn}
 K_n=E\times\cl\D^{\,n-1},\qquad
 Z_n=\{2\}\times\cl\D^{\,n-1}.
\end{equation}

\begin{lemma}\label{lem:product}
The compact $K_n$ is polynomially convex, non-pluripolar at every point, and non-plurithin at every point. In the decomposition \eqref{eq:RTS},
\[
 T=\varnothing,\qquad R=K_n\setminus Z_n,\qquad S=Z_n.
\]
Moreover $V_{K_n}^*(2,w)=V_E^*(2)$ for all $w\in\cl\D^{\,n-1}$.
\end{lemma}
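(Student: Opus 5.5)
The plan is to reduce everything to the planar set $E$ of \cref{lem:planar} via the product formula for extremal functions, and then read off the decomposition \eqref{eq:RTS}.

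\emph{Polynomial convexity.} A product of polynomially convex compacts is polynomially convex. In the plane, $E$ is full by \cref{lem:planar}, hence polynomially convex, and $\cl\D$ is too. The product statement itself follows from the Siciak product formula below: $V_{K_n}=0$ exactly on the hull.

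\emph{The extremal function.} I would invoke the classical product formula (Siciak; see \cite[Theorem~5.1.8]{Klimek1991}):
\[
 V_{A\times B}^*(z,w)=\max\{V_A^*(z),V_B^*(w)\}.
\]
Since $V_{\cl\D^{\,n-1}}^*(w)=\max_k\lp|w_k|$, which vanishes on the closed polydisc, this gives $V_{K_n}^*(2,w)=V_E^*(2)$ for $w\in\cl\D^{\,n-1}$. The same formula applied to the local pieces $(E\cap\cl D(x,r))\times(\cl\D^{\,n-1}\cap\text{small polydisc})$ decides local regularity. Products of small polydiscs are comparable to balls, so monotonicity lets me use polydisc neighborhoods. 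A point $(x,w)$ is locally regular iff both factors are locally regular at $x$ and $w$ respectively. The closed polydisc is locally regular everywhere, and \cref{lem:planar} says $E$ is locally regular exactly off $\{2\}$. Hence $R=K_n\setminus Z_n$. At points of $Z_n$, the local piece of $E$ near $2$ still has $V^*(2)>0$, because local regularity fails there.

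\emph{Non-pluripolarity and non-thinness.} Any neighborhood of a point of $K_n$ contains a product of a disk piece of $E$ (one of the $\cl D(p_j,\rho_j)$ if $x=2$) with a polydisc piece, which has nonempty interior. So $K_n$ is non-pluripolar at every point. Points of $R$ are non-plurithin since local pluriregularity implies non-thinness. For $a=(2,w)\in Z_n$, $K_n$ contains $\{2\}\times\cl\D^{\,n-1}$. This set is non-plurithin at $a$ by the argument given in \cref{sec:preliminaries}: restrict a plurisubharmonic $u$ to the complex line (or polydisc slice) through $a$ inside $Z_n$ and use the submean inequality on small circles. This excludes $\limsup u<u(a)$ along that slice, at least in the polydisc directions. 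When $w$ lies on the boundary of $\cl\D^{\,n-1}$, I would use a one-variable slice $\zeta\mapsto(2,w_1,\dots,\zeta,\dots)$ in a coordinate where $|w_k|<1$. If all coordinates have modulus one, I would instead use the analytic disc $\zeta\mapsto(2,\zeta w)$ for $|\zeta|\le1$. Subharmonicity of $u$ along it, together with upper semicontinuity, still yields $\limsup_{\zeta\to1,|\zeta|<1}u(2,\zeta w)\ge u(a)$, since a subharmonic function on the closed disk equals its fine limsup from the interior at boundary points? The latter is the delicate point. Alternatively I would use an affine line through $a$ whose intersection with $Z_n$ contains a nondegenerate sector at $a$, and use one-variable non-thinness of a sector or half-disk at its vertex, which is classical (Wiener criterion). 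Hence $T=\varnothing$ and $S=Z_n$.

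The main obstacle is this non-thinness at boundary points of $Z_n$, together with justifying the local product formula. I expect the cleanest route to be the one-variable fact that a closed disk is non-thin at each of its boundary points, applied on a coordinate slice.
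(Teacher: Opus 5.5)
Your proposal is correct and follows essentially the paper's route: the product formula \eqref{eq:product}, applied to $K_n$ and to its local product pieces, gives $V_{K_n}^*(2,w)=V_E^*(2)$ and $R=K_n\setminus Z_n$, and non-plurithinness comes from restricting to a coordinate slice in which one $w$-coordinate varies over $\cl\D$. The point you flag as delicate is settled by the fact you end with, and the paper uses exactly that fact: a closed disk is non-thin at each of its boundary points, so one coordinate slice works at every point of $K_n$, including the distinguished torus and the points of $R$, with no case split and no radial disc or sector needed.
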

\begin{proof}
A full planar compact is polynomially convex; the elementary planar regularity facts used here may be found in \cite{Ransford1995}. Products of polynomially convex compacts are polynomially convex, by separation in a coordinate outside the relevant factor. This proves the first assertion.

Every neighborhood of every point of $K_n$ contains a product of nonpolar planar pieces, and indeed contains an open subset of $\C^n$: at a point of $Z_n$ use one of the sufficiently small disks approaching $2$, together with interior pieces of the other factors. Thus $K_n$ is non-pluripolar at each point.

Fix $b=(z,w)\in K_n$. Vary one coordinate of $w$ in its closed unit disk, keeping all other coordinates fixed. If a local psh function has finite value at $b$, its restriction to this complex line is subharmonic and not identically $-\infty$. A closed disk is non-thin at every one of its points, including its boundary. Hence there can be no strict jump along $K_n$ at $b$. If the psh function has value $-\infty$ at $b$, a strict jump is impossible as well. This proves non-plurithinness everywhere.

At a point outside $Z_n$, products of small regular disk pieces give local pluriregularity; one may use the standard product formula
\begin{equation}\label{eq:product}
 V_{K_n}^*(z,w)=\max\{V_E^*(z),\lp|w_1|,\ldots,\lp|w_{n-1}|\};
\end{equation}
see \cite{Klimek1991,Levenberg2017}. More explicitly, each sufficiently small neighborhood contains a product of intersections of closed disks centered at, or containing, the corresponding coordinates; these planar pieces are regular at the point, and their product is regular there. At a point $(2,w)$, \eqref{eq:product} and \cref{lem:planar} give $V_{K_n}^*(2,w)=V_E^*(2)>0$. Such a point is not even globally pluriregular. The stated decomposition follows. It also illustrates \cref{thm:main-S} directly: $Z_n$ is negligible because it lies in a complex hyperplane, and is non-plurithin at its points by the disk argument above. The planar thinness barrier at $2$, lifted through the first coordinate, shows that $K_n\setminus Z_n$ is plurithin at every point of $Z_n$. Thus $N_{K_n}=S=Z_n$.
\end{proof}

\subsection{A smooth weight}

\begin{lemma}\label{lem:smooth}
There exist $b>0$ and a radial function $\Phi\in C^\infty(\C^n)\cap\calL_1^+$ such that
\begin{enumerate}
\item $\Phi$ is plurisubharmonic and $0\le\Phi\le b$ on $\cl B(0,1)$;
\item $\Phi(z)=\log\norm z+b$ for $\norm z\ge1$, and $\Phi(z)\ge b+\log\norm z$ for $z\ne0$;
\item $\rank(\ddc\Phi)=n-1$ on $\{\norm z>1\}$.
\end{enumerate}
\end{lemma}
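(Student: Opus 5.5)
We take $\Phi(z)=\phi(\log\norm z)$ for a single real function $\phi\in C^\infty(\R)$ of $t=\log\norm z$. The function $\phi$ will be convex and nondecreasing, with $\phi(t)=t+b$ for $t\ge0$ and $\phi$ constant for $t\le-1$. Such a $\Phi$ is constant near the origin, so it is smooth there and extends smoothly across $0$. On $\{z\ne0\}$, the function $\log\norm z$ is plurisubharmonic and plurisubharmonicity of $\Phi$ follows from the standard composition rule. A convex nondecreasing function of a psh function is psh; the same holds for smooth convex functions of $t$, since they are limits of such.

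For the explicit choice we start from a smooth convex $\chi$ on $\R$ with $\chi(t)=0$ for $t\le-1$, $\chi(t)=t+\beta$ for $t\ge0$, and $0\le\chi'\le1$. One construction is $\chi(t)=\int_{-\infty}^t\sigma(s)\,ds$, where $\sigma$ is smooth and nondecreasing, equals $0$ on $(-\infty,-1]$ and $1$ on $[0,\infty)$. Then $\beta=\int_{-1}^0\sigma\in(0,1)$. We set $b=\beta$ and $\phi=\chi$, and verify the listed properties in turn.

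\textbf{Growth and (i)--(ii).} For $\norm z\ge1$ we have $\Phi=\log\norm z+b$, which gives the first part of (ii) and $\Phi\in\calL_1^+$; on the unit ball $\Phi$ is bounded. Property (i) follows from monotonicity of $\chi$: on $\cl B(0,1)$, $0=\chi(-\infty)\le\Phi\le\chi(0)=b$.

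\textbf{The inequality $\Phi\ge b+\log\norm z$ in (ii).} We read it as concerning $\norm z\le1$, where $\log\norm z\le0$. Convexity places the graph of $\chi$ above its tangent line at $0$, which is $t+b$. Hence $\chi(t)\ge t+b$ for all $t$, which is exactly the claim. This tangent-line comparison is the one point requiring care, and it is why the linear part of $\chi$ must be exactly the tangent at $0$.

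\textbf{(iii).} For $\norm z>1$, $\Phi=\log\norm z+b$. The Levi form of $\log\norm z$ is
\[
\frac{1}{2\norm z^2}\Bigl(\omega_0\text{-part}-\frac{|\langle\cdot,z\rangle|^2}{\norm z^2}\Bigr).
\]
It is positive definite on the complex orthogonal complement of $z$ and vanishes in the radial complex direction. Its rank is therefore exactly $n-1$.

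The main obstacle is the compatibility of smoothness, convexity and the exact affine piece. The construction of $\chi$ as an iterated integral of a smooth step function handles this directly.
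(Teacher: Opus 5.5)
Your proposal is correct and matches the paper's proof step by step. You take $\Phi=\chi(\log\norm z)$ with $\chi$ the primitive of a smooth step function, get plurisubharmonicity from convex nondecreasing composition, prove $\chi(t)\ge t+b$ (you via the tangent line at $0$, the paper via $\chi'\le1$, which amount to the same thing), and get rank $n-1$ from the Levi form of $\log\norm z$ with kernel $\C z$. One small correction: your aside that the composition rule extends to arbitrary smooth convex functions is false without monotonicity ($-\log\norm z$ is a counterexample), but you never need it because your $\chi$ is nondecreasing.
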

\begin{proof}
Choose a smooth nondecreasing function $s:\R\to[0,1]$ equal to $0$ for $t\le-1$ and to $1$ for $t\ge0$, with all derivatives matching the constant pieces. Put
\[
 \chi(t)=\int_{-1}^t s(x)\,dx,\qquad b=\int_{-1}^0s(x)\,dx,
\]
where the integral is zero when $t\le-1$. Then $\chi$ is smooth, convex, and nondecreasing, equals $0$ on $(-\infty,-1]$, and equals $t+b$ on $[0,\infty)$. Define $\Phi(z)=\chi(\log\norm z)$ for $z\ne0$ and $\Phi(0)=0$. Convex increasing composition preserves plurisubharmonicity. The function is constant near $0$, so it is smooth there as well as everywhere else. Its growth and bounds follow from the definition. In particular, $0\le\chi'\le1$ and $\chi(t)=t+b$ for $t\ge0$ imply $\chi(t)\ge t+b$ for all $t$. Finally, for $z\ne0$,
\[
 \ddc\log\norm z
 =\frac{i}{4}\sum_{j,k=1}^n
 \left(\frac{\delta_{jk}}{\norm z^2}
       -\frac{\bar z_jz_k}{\norm z^4}\right)
 dz_j\wedge d\bar z_k,
\]
where $\delta_{jk}$ is the Kronecker symbol. By the Cauchy--Schwarz inequality, this form is nonnegative, and its kernel is the complex radial direction $\C z$. Since $\ddc\Phi=\ddc\log\norm z$ for $\norm z>1$, assertion~\textup{(iii)} follows. In particular, for $n\ge2$ its exterior powers satisfy $(\ddc\Phi)^n=0$ and $(\ddc\Phi)^{n-1}\ne0$ there.
\end{proof}

The same radial function fixes the initial envelope for any compact set $K$ containing the closed unit ball $A=\cl B(0,1)$. Put $\Psi_\tau=\tau\Phi$ and $\psi_\tau=\Psi_\tau|_K$. Then
\begin{equation}\label{eq:base-identity}
 V_{\tau,K,\psi_\tau}=F_\tau=\Psi_\tau
 \quad\text{on }\C^n.
\end{equation}
Indeed, $\Psi_\tau$ is admissible. Any other admissible $u$ is at most $\tau b$ on $A$, so $V_A=\lp\norm z$ gives
\[
 u(z)\le\tau b+\tau\lp\norm z.
\]
Outside $A$ this is $\Psi_\tau(z)$; on $A$ the obstacle gives $u\le\Psi_\tau$ directly. For every $\delta\ge\tau$, the same extension gives contact on $K$. If $K$ contains a point $a$ with $\norm a>1$, then for $0<\delta<\tau$,
\[
 V_{\delta,K,\psi_\tau}(a)
 \le\tau b+\delta\log\norm a
 <\tau b+\tau\log\norm a=\psi_\tau(a).
\]
Consequently,
\begin{equation}\label{eq:radial-contact}
 \Lambda(K,\psi_\tau)=[\tau,\infty)
 \quad\text{whenever }K\supset A\text{ and }K\setminus A\ne\varnothing.
\end{equation}
We use these two identities in both examples.

\begin{proof}[Proof of \cref{thm:main-example}]
Take $K_n$ from \eqref{eq:Kn}, and put $\Psi_\tau=\tau\Phi$ and $\psi_\tau=\Psi_\tau|_{K_n}$. Since $E\supset\cl\D$, the compact $K_n$ contains the closed unit ball. It also contains $(2,0,\ldots,0)$. Thus \cref{eq:base-identity,eq:radial-contact} give the initial envelope and the contact ray.

For $\delta>\tau$, apply \eqref{eq:two-sided-slope}, \eqref{eq:base-identity}, and \cref{lem:product} to obtain, at every $a\in Z_n$,
\[
 V_{\delta,K_n,\psi_\tau}^*(a)-\psi_\tau(a)
 \ge(\delta-\tau)V_{K_n}^*(a)
 =(\delta-\tau)V_E^*(2)
 \ge\frac49(\delta-\tau)\log2.
\]
This proves \eqref{eq:main-gap}. At every point of $K_n\setminus Z_n$, \cref{prop:local-weights} gives regularity for all $\delta\ge\tau$. Hence
\[
 \Lambda_{\reg}(a;K_n,\psi_\tau)=
 \begin{cases}
 [\tau,\infty),&a\in K_n\setminus Z_n,\\
 \{\tau\},&a\in Z_n.
 \end{cases}
\]
The global regular parameter set is their intersection, namely $\{\tau\}$. Since $\norm a\ge2$ on $Z_n$, the assertion about $\rank(\ddc\Psi_\tau)$ follows from \cref{lem:smooth}.
\end{proof}

\subsection{Estimates and exact formulas}

Restrictions to complex lines give an upper bound for the defect throughout $Z_n$. We first obtain an affine formula on a smaller polydisc. We then compute the right derivative at every point of $Z_n$.

For $w\in\cl\D^{\,n-1}$, put
\[
 q(w)=\max_{1\le j\le n-1}|w_j|,\qquad
 R_w=\begin{cases}2/q(w),&q(w)>0,\\+\infty,&q(w)=0,\end{cases}
 \qquad E_w=E\cap\cl D(0,R_w),
\]
with $E_w=E$ when $R_w=+\infty$. Each $E_w$ contains $\cl\D\cup\{2\}$ and is nonpolar.

Since $\max_{\zeta\in E}|\zeta|=5/2+e^{-32}$, attained on the first small disk, put
\[
 \rho_*=\frac{2}{\max_{\zeta\in E}|\zeta|}
 =\frac4{5+2e^{-32}}<\frac45.
\]

\begin{proposition}\label{prop:fibre-bounds}
For every $w\in\cl\D^{\,n-1}$ and $\delta>\tau$,
\begin{equation}\label{eq:fibre-bounds}
 (\delta-\tau)V_E^*(2)
 \le F_\delta(2,w)-\psi_\tau(2,w)
 \le(\delta-\tau)V_{E_w}^*(2).
\end{equation}
If $q(w)\le\rho_*$, then, for every $\delta\ge\tau$,
\[
 F_\delta(2,w)=\psi_\tau(2,w)+(\delta-\tau)V_E^*(2).
\]
On the central complex line, for every $\zeta\in\C$ and $\delta\ge\tau$,
\[
 F_\delta(\zeta,0)=\tau\Phi(\zeta,0)+(\delta-\tau)V_E^*(\zeta).
\]
\end{proposition}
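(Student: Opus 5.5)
The lower bound in \eqref{eq:fibre-bounds} is immediate. By \eqref{eq:two-sided-slope} with $\alpha=\tau$, $\beta=\delta$, together with \eqref{eq:base-identity} (which gives $F_\tau=\Psi_\tau$) and \cref{lem:product} (which gives $V_{K_n}^*(2,w)=V_E^*(2)$), we get $F_\delta(2,w)-\psi_\tau(2,w)\ge(\delta-\tau)V_E^*(2)$. For the central line, use \eqref{eq:product} with $w=0$, which gives $V_{K_n}^*(\zeta,0)=V_E^*(\zeta)$. This yields the lower half of the formula $F_\delta(\zeta,0)=\tau\Phi(\zeta,0)+(\delta-\tau)V_E^*(\zeta)$.

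The upper bound comes from restricting competitors to a complex line. Fix $w$ and consider the line $\ell(\zeta)=(\zeta,\zeta w/2)$, which passes through $(2,w)$ at $\zeta=2$. If $\zeta\in E$ and $|\zeta|\le R_w$, then every $|\zeta w_j/2|\le1$, so $\ell(\zeta)\in K_n$. Put $c=\norm{(1,w/2)}\ge1$, so that $\norm{\ell(\zeta)}=c|\zeta|$.

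Now take any $u\in\calL_\delta$ with $u\le\psi_\tau$ on $K_n$, and set $f=u\circ\ell$. Then $f$ is subharmonic of growth at most $\delta\lp|\zeta|$, and $f(\zeta)\le\tau\chi(\log|\zeta|+\log c)$ on $E_w$. Two estimates on this obstacle are needed:
\begin{itemize}
\item On $\cl\D$ we have $\chi(\log|\zeta|+\log c)\le\chi(\log c)=\log c+b$, so $f\le\tau(b+\log c)$ there.
\item For $|\zeta|\ge1$ we have $c|\zeta|\ge1$, so by \cref{lem:smooth} the obstacle equals exactly $\tau(b+\log c+\log|\zeta|)$.
\end{itemize}

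Define $s=0$ on $\cl\D$ and
\[
 s=\max\{f-\tau(b+\log c+\log|\zeta|),\,0\}\quad\text{on }\{|\zeta|>1\}.
\]
The function $f-\tau\log|\zeta|$ is subharmonic off the origin. Its $\limsup$ at the unit circle is at most $\tau(b+\log c)$, because $f$ is upper semicontinuous and bounded on $\cl\D$ as above. Hence the gluing produces a subharmonic function. It lies in $\calL_{\delta-\tau}$ in one variable and vanishes on $E_w$. Therefore $s\le(\delta-\tau)V_{E_w}$, and evaluating at $\zeta=2$ (where $c|\zeta|=2c\ge1$) gives
\[
 u(2,w)\le\tau\Phi(2,w)+(\delta-\tau)V_{E_w}^*(2).
\]
Taking the supremum over $u$ bounds $V_{\delta,K_n,\psi_\tau}(2,w)$.

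The main obstacle is passing to the regularization $F_\delta(2,w)$, since nearby points use different lines and different sets $E_{w'}$. I would run the same argument at nearby points $(z',w')$. Use the line $\zeta\mapsto(\zeta,\zeta w'/z')$, the set $E\cap\cl D(0,|z'|/q(w'))$, and the point $\zeta=z'$, evaluating $V^*$ of that set there. As $(z',w')\to(2,w)$, these compacts contain $E\cap\cl D(0,R_w-\varepsilon)$. The question is then whether $V^*_{E\cap\cl D(0,R)}(2)$ is right/left continuous in $R$. It should be, because the disks in $E$ accumulate only at $2$, so for all but countably many $R$ the truncation changes only finitely many disks. If this continuity argument is delicate, I would instead pass to a quasi-everywhere statement via \eqref{eq:qe-envelope}, or use the alternative that $F_\delta$ is plurisubharmonic and the bound holds off a pluripolar set.

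Finally, if $q(w)\le\rho_*$, then $R_w\ge\max_E|\zeta|$, so $E_w=E$ and the two bounds in \eqref{eq:fibre-bounds} coincide. The case $\delta=\tau$ is \eqref{eq:base-identity}. For the central line with $w=0$ we have $c=1$ and no truncation. The same gluing, now evaluated at an arbitrary $\zeta$, gives $f(\zeta)\le\tau(b+\lp|\zeta|)+(\delta-\tau)V_E(\zeta)$ for $|\zeta|\ge1$; for $|\zeta|\le1$ the obstacle bound is immediate. Here the regularization is easier because the line is fixed while the points vary. Combined with the lower bound, this proves the exact formula.
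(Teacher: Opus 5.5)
Your lower bounds, the case $q(w)\le\rho_*$, and the case $\delta=\tau$ match the paper. The gap is in the upper bound.

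Running the line argument on an unregularized competitor $u$ only bounds $V_{\delta,K_n,\psi_\tau}(2,w)$, and that bound is empty. Since $(2,w)\in K_n$, you already have $u(2,w)\le\psi_\tau(2,w)$; indeed the unregularized $V_{E_w}(2)$ vanishes because $2\in E_w$. So all the content of \eqref{eq:fibre-bounds} lies in the passage to $F_\delta(2,w)$, which you leave open.

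\begin{itemize}
\item \emph{Nearby lines.} This route needs at least $\lim_{s\uparrow R_w}V^*_{E_s}(2)=V^*_{E_{R_w}}(2)$, a continuity statement at the irregular point $2$. ``It should be'' does not establish it. Convergence of Green functions away from the limiting compact says nothing at $2$; the paper needs the separate maximum-principle argument of \cref{lem:green-limit} even for right continuity. Moreover, the fixed radius $R_w$ may be one of the exceptional radii you set aside.
\item \emph{Quasi-everywhere fallback.} This fails as stated. $\ell_w(E_w)$ lies in a complex line and is therefore pluripolar, so the condition $u\le\psi_\tau$ q.e.\ on $K_n$ carries no information along that line.
\item \emph{Central line.} Your claim that this case is easier because ``the line is fixed'' is mistaken. $F_\delta(\zeta,0)$ is a $\limsup$ over points of $\C^n$, mostly off the line, and regularizing a one-variable bound along the line does not control it.
\end{itemize}

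The missing idea is to apply the line argument to the plurisubharmonic function $F_\delta$ itself, not to raw competitors. By \cref{lem:product}, every point of $K_n\setminus Z_n$ is locally pluriregular, and $\delta\in\Lambda$, so \cref{prop:local-weights} gives $F_\delta=\psi_\tau$ on $K_n\setminus Z_n$. Set $u_h=F_\delta\circ\ell_w-\tau\Phi\circ\ell_w$. Then:
\begin{itemize}
\item $u_h\ge0$ by \eqref{eq:base-identity};
\item $u_h=0$ on $E_w\setminus\{2\}$, in particular on $\cl\D$;
\item $u_h$ is subharmonic on $\{|\zeta|>1\}$, where $\Phi\circ\ell_w$ is harmonic.
\end{itemize}
Gluing with $0$ on $\cl\D$ gives a subharmonic function in $\calL_{\delta-\tau}(\C)$. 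By \eqref{eq:polar-removal}, $u_h\le(\delta-\tau)V^*_{E_w\setminus\{2\}}=(\delta-\tau)V^*_{E_w}$ on all of $\C$. Evaluating at $\zeta=2$ gives the upper bound with no regularization or continuity-in-$R$ issue. Taking $w=0$ and evaluating at an arbitrary $\zeta$ gives the central-line identity.
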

\begin{proof}
Fix $w$, write $h=\delta-\tau>0$, and consider the complex line
\[
 \ell_w(\zeta)=\left(\zeta,\frac{\zeta}{2}w\right).
\]
Then $\ell_w(2)=(2,w)$ and $\ell_w(\zeta)\in K_n$ exactly when $\zeta\in E_w$. Define
\[
 u_h(\zeta)=F_{\tau+h}(\ell_w(\zeta))-\tau\Phi(\ell_w(\zeta)).
\]
By \eqref{eq:base-identity} and monotonicity, $u_h\ge0$. If $\zeta\in E_w\setminus\{2\}$, the point $\ell_w(\zeta)$ is locally pluriregular in $K_n$, so \cref{prop:local-weights} gives $u_h(\zeta)=0$. In particular, $u_h=0$ on $\cl\D$.

Put $c_w=(1+\norm w^2/4)^{1/2}$. For $|\zeta|>1$, we have $\norm{\ell_w(\zeta)}=c_w|\zeta|>1$, and hence
\[
 \Phi(\ell_w(\zeta))=\log|\zeta|+\log c_w+b.
\]
This function is harmonic on $\C\setminus\cl\D$. Therefore $u_h$ is subharmonic there. Its upper semicontinuity and its zero values on $\partial\D$ give a boundary upper limit at most zero. Since $u_h\ge0$, the subharmonic gluing lemma joins it to zero on $\cl\D$. Thus $u_h$ is subharmonic on $\C$, and its growth satisfies $u_h\in\calL_h(\C)$.

The function $u_h/h$ is nonpositive on $E_w\setminus\{2\}$. By the definition of the extremal function and \eqref{eq:polar-removal},
\begin{equation}\label{eq:line-upper}
 u_h\le hV_{E_w\setminus\{2\}}^*=hV_{E_w}^*\quad\text{on }\C.
\end{equation}
Evaluating at $2$ proves the upper bound in \eqref{eq:fibre-bounds}. The lower bound follows from \eqref{eq:two-sided-slope}, \eqref{eq:base-identity}, and $V_{K_n}^*(2,w)=V_E^*(2)$.

If $q(w)\le\rho_*$, then $E_w=E$, so the bounds in \eqref{eq:fibre-bounds} agree. The case $\delta=\tau$ follows from \eqref{eq:base-identity}.

For $w=0$, \eqref{eq:line-upper} holds with $E_w=E$ at every $\zeta$. The reverse inequality follows from \eqref{eq:two-sided-slope} and $V_{K_n}^*(\zeta,0)=V_E^*(\zeta)$, proving the central-line identity.
\end{proof}

On the whole exceptional polydisc, \cref{lem:planar,prop:fibre-bounds} give the dimension-independent bounds
\[
 \frac49(\delta-\tau)\log2
 \le F_\delta(2,w)-\psi_\tau(2,w)
 \le(\delta-\tau)\log2.
\]
The next result determines the initial slope throughout the exceptional polydisc.

\begin{theorem}\label{thm:product-derivative}
For every $w\in\cl\D^{\,n-1}$ and $\tau>0$,
\[
 D_\tau(2,w)=V_{E_w}^*(2).
\]
Consequently,
\[
 D_\tau(2,w)=V_E^*(2)
 \quad\Longleftrightarrow\quad q(w)\le\rho_*.
\]
For the remaining points,
\[
 \rho_*<q(w)\le1
 \quad\Longrightarrow\quad
 V_E^*(2)<D_\tau(2,w)\le\log2.
\]
At the boundary of the polydisc,
\[
 q(w)=1\quad\Longrightarrow\quad D_\tau(2,w)=\log2.
\]
The derivative is independent of $\tau$ for this family of weights.
\end{theorem}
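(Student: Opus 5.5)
The plan is to squeeze $D_\tau(2,w)$ between two copies of $V_{E_w}^*(2)$. The upper bound is already available. By \cref{prop:fibre-bounds}, $F_{\tau+h}(2,w)-\psi_\tau(2,w)\le hV_{E_w}^*(2)$ for every $h>0$. Dividing by $h$ and letting $h\downarrow0$ gives $D_\tau(2,w)\le V_{E_w}^*(2)$.

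For the lower bound, it suffices, by the monotonicity of the quotient in \cref{prop:sensitivity}, to show the following. For every $\varepsilon>0$ there is $h_0>0$ such that $F_{\tau+h}(2,w)-\psi_\tau(2,w)\ge h\,(V_{E_w}^*(2)-\varepsilon)$ for $0<h<h_0$.

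The key observation is that $\ell_w(\zeta)=\zeta e_w$ with $e_w=(1,w/2)$, so $\ell_w$ parametrizes a complex line through the origin. Along it, $\Phi$ is $\log|\zeta|+\log c_w+b$ for $|\zeta|>1$, whereas $\ddc\Phi$ is positive transversally. Let $\pi(z)=\langle z,e_w\rangle/c_w^2$, so that $\pi(\ell_w(\zeta))=\zeta$. Then $\log\norm z\ge\log c_w+\log|\pi(z)|$, with equality exactly on the line.

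Choose $v\in\calL_1(\C)$ with $v\le0$ on $E_w\setminus\{2\}$ and $v(2)\ge V_{E_w}^*(2)-\varepsilon$. This is possible by \eqref{eq:polar-removal}. Since $E_w\setminus\{2\}$ is locally regular, I would arrange $v$ continuous, for instance as a regularized extremal function of a slightly enlarged regular compact in the approximation. I would then try the competitor
\[
 u_h(z)=\max\bigl\{\tau\Phi(z),\ \tau b+\tau\log c_w+\tau\log|\pi(z)|+h\,v(\pi(z))\bigr\},
\]
with the second entry used only where $|\pi(z)|\ge1$ and $u_h=\tau\Phi$ near $\{|\pi|\le1\}$. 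Gluing works as in the proof of \cref{thm:local-strict}, because $v$ is nonpositive on $\cl\D\subset E_w$, up to a small continuous correction. This function is plurisubharmonic, since the second entry is $\tau\log|\cdot|+h v$ composed with a linear map. It lies in $\calL_{\tau+h}$, and at $(2,w)$ it equals $\psi_\tau(2,w)+hv(2)$.

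The main obstacle is admissibility, $u_h\le\tau\Phi$ on $K_n$. At points $z\in K_n$ with $\pi(z)\in E_w$, or close to it, the inequality follows from $v\le0$ there, after shrinking by continuity. The remaining points $z\in K_n$ have $v(\pi(z))>0$. They form a compact set disjoint from the line, since $\ell_w^{-1}(K_n)=E_w$. On that set the gap $\log\norm z-\log c_w-\log|\pi(z)|$ has a positive minimum $\eta$, and $|\pi(z)|\ge1$ there. The gap is needed only for $|\pi(z)|$ where $\Phi=\log\norm z+b$, and it is controlled since $\norm z\ge c_w|\pi(z)|>1$. Taking $h\sup_{K_n}v^+\le\tau\eta$ gives admissibility. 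Letting $h\downarrow0$ and then $\varepsilon\downarrow0$ yields $D_\tau(2,w)=V_{E_w}^*(2)$, and this value is visibly independent of $\tau$.

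The remaining claims are planar.
\begin{enumerate}
\item If $q(w)\le\rho_*$, then $R_w\ge\max_E|\zeta|$, so $E_w=E$.
\item If $q(w)>\rho_*$, then $R_w<\max_E|\zeta|$, so $E_w$ omits a nonempty, relatively open, hence nonpolar part of the first small disk.
\item In case (ii), I would deduce $V_{E_w}^*(2)>V_E^*(2)$ as follows. The difference $V_{E_w}^*-V_E^*$ is nonnegative and harmonic on $\C\setminus E$, which is connected. It vanishes at infinity only if it vanishes identically, and it is strictly positive at interior points of the omitted piece, where $V_E^*=0<V_{E_w}^*$. By the minimum principle it is therefore positive on $\C\setminus E$. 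To pass to the point $2$, I would use $v\mapsto$ adding a small multiple of $V_{E_w}^*-V_E^*$ as a competitor, or Harnack at the thin point $2$ together with fine continuity. This is the secondary delicate point.
\item Since $q\le1$ gives $R_w\ge2$, we have $E_w\supset\cl\D$ and $V_{E_w}^*(2)\le V_{\cl\D}(2)=\log2$.
\item If $q(w)=1$, then $E_w=\cl\D\cup\{2\}$, because the small disks lie in $|\zeta|>2$. Polar removal then gives exactly $\log2$.
\end{enumerate}
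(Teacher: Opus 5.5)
Your upper bound and items (i), (iv), (v) agree with the paper. The lower-bound competitor fails, however, and not for a technical reason. You claim that the points of $K_n$ with $v(\pi(z))>0$ form a compact set disjoint from the line. This is false for every $w\ne0$, because the level set $\{\pi=2\}$ through $a=(2,w)$ meets the interior of $K_n$ arbitrarily close to $a$.

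Explicitly, for large $j$ put $z^{(j)}=(p_j,t_jw)$ with $t_j=1-2^{1-j}\norm{w}^{-2}\in(0,1)$. Then $z^{(j)}$ is an interior point of $K_n$, $\pi(z^{(j)})=2$, and $z^{(j)}\to a$. At $z^{(j)}$ your second entry equals $\tau b+\tau\log(2c_w)+hv(2)=\psi_\tau(a)+hv(2)$, while $\tau\Phi(z^{(j)})\to\psi_\tau(a)$. At interior points, a q.e.\ inequality between a plurisubharmonic function and a continuous obstacle holds everywhere, by the sub-mean value property. So admissibility forces $v(2)\le0$, and this competitor never gives more than $F_{\tau+h}(a)\ge\psi_\tau(a)$. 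The reason is that $v\circ\pi$ is constant on hyperplanes transverse to $Z_n$, whereas the gap $\log\norm z-\log(c_w|\pi(z)|)$ vanishes to second order along the line. Also, $v$ cannot be continuous at $2$: it satisfies $v\le0$ on the disks accumulating at $2$ but $v(2)>0$.

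The paper instead evaluates the planar function in the first coordinate: $u_h(Z)=\tau b+\tau\log|L(Z)|+hv_s(z_1)$, where $|L|=c_w|\pi|$. Every $Z\in K_n$ has $z_1\in E$, so the correction vanishes on $K_n\setminus Z_n$ wherever $|z_1|\le s$, and $Z_n$ is pluripolar. Even with this change you cannot take $v=V_{E_w}^*$. Suppose the circle $|\zeta|=R_w$ cuts a small disk. Points of $K_n$ over that disk with $|z_1|$ slightly larger than $R_w$ approach the line. There $V_{E_w}^*(z_1)$ is of first order in $|z_1|-R_w$, while the gap is only quadratic.

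The paper therefore takes $v_s=V_{E_s}^*$ with $s>R_w$ avoiding the inner tangency radii. Then $\{Z\in K_n:|z_1|\ge s\}$ is a compact set off the line with a positive gap, which gives $D_\tau(a)\ge V_{E_s}^*(2)$. The paper then needs \cref{lem:green-limit}, namely $V_{E_s}^*(2)\to V_{E_{R_w}}^*(2)$ as $s\downarrow R_w$. This is a pointwise statement at the irregular point $2$ and does not follow from convergence off $E_{R_w}$. Your final passage $\varepsilon\downarrow0$ tacitly assumes it.

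Your item (iii) also needs the paper's comparison argument. Positivity of $d=V_{E_w}^*-V_E^*$ in $\C\setminus E$ comes from boundary points of the removed piece with modulus greater than $R_w$, not from its interior points, which lie in $E$. One has $d\ge\gamma V_E^*$ on a large circle $|\zeta|=L$. The minimum principle on $D(0,L)\setminus E$, with the bounded polar exception $2$, then gives $V_{E_w}^*(2)\ge(1+\gamma)V_E^*(2)$.
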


We need a pointwise convergence fact. Ordinary convergence of Green functions outside a limiting compact does not by itself justify evaluation at the irregular point $2$.

\begin{lemma}\label{lem:green-limit}
For $R\ge2$, set $E_R=E\cap\cl D(0,R)$. Then
\[
 \lim_{s\downarrow R}V_{E_s}^*(2)=V_{E_R}^*(2).
\]
If $R<\max_{\zeta\in E}|\zeta|$, then
\[
 V_{E_R}^*(2)>V_E^*(2).
\]
\end{lemma}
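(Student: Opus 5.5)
The plan is to treat the two assertions separately. In both we work only with the planar Green functions $V_{E_s}^*$, the q.e.\ form \eqref{eq:qe-envelope} of the envelope, and \eqref{eq:polar-removal}.

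\emph{Right continuity.} For $s>R$ we have $E_R\subset E_s$, so $V_{E_s}^*(2)\le V_{E_R}^*(2)$, and the values increase as $s\downarrow R$. Only the lower bound needs proof. Write $E_s=E_R\cup G_s$, where $G_s$ is the union of the parts of the small disks $\cl D(p_j,\rho_j)$ lying in $\{R<|\zeta|\le s\}$.

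\emph{Case $R>2$.} Since $p_j\to2<R$, only finitely many small disks meet $\{|\zeta|>R'\}$ for a fixed $R'\in(2,R)$. Hence for $s$ close to $R$ the set $G_s$ lies in an arbitrarily small neighbourhood of $E_R\cap\{|\zeta|=R\}$. Each point of $E_R\cap\{|\zeta|=R\}$ is a point of a closed disk, hence regular for $E_R$. By continuity of $V_{E_R}^*$ at regular points, $\varepsilon_s:=\sup_{G_s}V_{E_R}^*\to0$. The function $V_{E_R}^*-\varepsilon_s\in\calL_1$ is $\le0$ q.e.\ on $E_s$, so \eqref{eq:qe-envelope} gives $V_{E_s}^*\ge V_{E_R}^*-\varepsilon_s$, which proves the limit.

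\emph{Case $R=2$.} Here $E_R=\cl\D\cup\{2\}$, so $V_{E_R}^*(2)=\log2$, but $G_s$ now contains whole tails $\bigcup_{j\ge k(s)}\cl D(p_j,\rho_j)$ with $k(s)\to\infty$. I would imitate the barrier of \cref{lem:planar}. Take the tail potential
\[
 g_k(z)=2^{k-1}\sum_{j\ge k}2^{-j}\log|z-p_j|\in\calL_1 ,
\]
which is $\le\log4$ on $\cl\D$, satisfies $g_k(2)\ge-(k+1)\log2$, and is at most $-2^{k+3}$ on every disk with $j\ge k$. Then set
\[
 v=(1-\eta)\lp|z|+\eta(g_k-\log4),\qquad \eta=k^{-2}.
\]
For large $k$ this is $\le0$ on $E_s$ and satisfies $v(2)\to\log2$. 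This gives the lower bound.

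\emph{Strict inequality.} Put $d=V_{E_R}^*-V_E^*\ge0$. I would use the balayage identity, valid everywhere after regularization by \eqref{eq:polar-removal}:
\[
 d(z)=\int_{E\setminus E_R}g_{\Omega_R}(z,t)\,d\mu_E(t),\qquad \Omega_R=\C\setminus E_R .
\]
Here $\mu_E$ is the equilibrium measure of $E$, and $g_{\Omega_R}(\cdot,t)$ is the Green function of $\Omega_R$ with pole $t$, extended by $0$ q.e.\ on $E_R$ and upper regularized. The hypothesis $R<\max_E|\zeta|$ means that $E\setminus E_R$ contains a relatively open arc of the outer boundary of one small disk. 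The support of $\mu_E$ is the outer boundary of $E$, so $\mu_E(E\setminus E_R)>0$. It remains to check $g_{\Omega_R}^*(2,t)>0$. The point $2$ is an irregular boundary point of $\Omega_R$, since $E_R$ is thin at $2$ (the barrier $h$ of \cref{lem:planar} still works). At irregular points the regularized Green function is positive, because its fine limit through $\Omega_R$ equals its value. Hence $d(2)>0$. If invoking balayage at the irregular point seems too heavy, the fallback is a direct comparison. For any competitor $u\le0$ on $E$, the function $u-V_{E_R}^*$ is at most $-c<0$ on a small closed subdisk $Q'$ of $E\setminus E_R$. The extended maximum principle then gives $u-V_{E_R}^*\le-c\,\omega(\cdot,Q')$, where $\omega(\cdot,Q')$ is the harmonic measure of $Q'$ relative to $\C\setminus(E_R\cup Q')$, and one evaluates the regularized harmonic measure at $2$.

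I expect this last step to be the main obstacle: the Green potential or harmonic measure must be shown to stay positive at the irregular point $2$, and this is exactly where the thinness of $E_R$ at $2$ enters.
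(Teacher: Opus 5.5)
\textbf{Gap in the case $R>2$.} Your argument breaks down at the countably many radii $R=p_j-\rho_j$. At such a radius $\cl D(p_j,\rho_j)\cap\cl D(0,R)=\{R\}$, so $R$ is an isolated point of $E_R$. It is polar, hence irregular. By \eqref{eq:polar-removal}, $V_{E_R}^*=V_{E_R\setminus\{R\}}^*$ is harmonic and strictly positive near $R$. So your claim that every point of $E_R\cap\{|\zeta|=R\}$ is regular for $E_R$ is false there.

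Since $G_s$ contains the lenses $\cl D(p_j,\rho_j)\cap\{|\zeta|\le s\}$, which shrink to $R$, you get $\varepsilon_s\to V_{E_R}^*(R)>0$. The q.e.\ comparison then only yields $\lim_{s\downarrow R}V_{E_s}^*(2)\ge V_{E_R}^*(2)-V_{E_R}^*(R)$. These radii cannot be discarded. The lemma is applied with $R=R_w=2/q(w)$, and $q(w)=2/(p_j-\rho_j)\in(\rho_*,1)$ does occur. The proof of \cref{thm:product-derivative} avoids tangency radii only for the auxiliary $s$, not for $R$.

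The paper avoids the problem by localizing at $2$. It takes a circle $C_r$ about $2$ with $r<R-2$ that misses $E$, so all $E_s$ coincide with $E$ on $D(2,r)$. Locally uniform convergence $V_{E_s}^*\uparrow V_{E_R}^*$ off $E_R$ then gives $\sup_{C_r}(V_{E_R}^*-V_{E_s}^*)\to0$. Finally it applies the maximum principle on $D(2,r)\setminus E$, with $2$ as a bounded exceptional point. The tangent point lies outside $D(2,r)$ and plays no role.

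For non-tangency radii your shortcut via \eqref{eq:qe-envelope} is correct and avoids that maximum principle. Your treatment of $R=2$ is essentially the paper's tail-barrier argument with a different normalization.

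\textbf{Strict inequality.} Here you take a different and heavier route, and its decisive step is not established. The balayage identity $d=\int g_{\Omega_R}(\cdot,t)\,d\mu_E(t)$ is standard off a polar set, but you need it at the irregular point $2$ itself. \eqref{eq:polar-removal} gives no pointwise identities. You would have to show, for instance, that both sides are finely continuous at $2$ and agree q.e., and then use the irregularity of $2$ to get $g_{\Omega_R}^*(2,t)>0$. As written, this step, which you flag yourself, is asserted rather than proved. The same holds for the evaluation of the regularized harmonic measure at $2$ in your fallback.

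The paper needs none of this. It argues as follows.
\begin{enumerate}
\item The function $d$ is nonnegative and harmonic on the connected set $\C\setminus E$, and positive near the removed arc of the first small disk. Hence it is positive on all of $\C\setminus E$.
\item On a large circle $|\zeta|=L$ this gives $d\ge\gamma V_E^*$ for some $\gamma>0$.
\item The minimum principle on $D(0,L)\setminus E$, with the bounded polar exception $2$, extends this inequality to all of $D(0,L)\setminus E$.
\item Since $V_E^*=0$ on $E\setminus\{2\}$ while $V_E^*(2)>0$, the value $V_E^*(2)$ is a limit superior taken through $\C\setminus E$. Upper semicontinuity of $V_{E_R}^*$ then gives $V_{E_R}^*(2)\ge(1+\gamma)V_E^*(2)$.
\item \eqref{eq:planar-gap} makes this inequality strict.
\end{enumerate}
Thinness at $2$ thus enters only through the already-established bound $V_E^*(2)\ge\frac49\log2$.
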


\begin{proof}
All $E_R$ are full. Their components, apart from the accumulation point $2$, are a disk, closed lenses, or isolated tangent points. A nondegenerate lens is regular, and an isolated tangent point is polar and does not change the regularized Green function. The complements are connected: all the disks and lenses have centers or axes on the real line, so a point outside them in the upper or lower half-plane can be moved vertically away and then around the compact.

Suppose first that $R>2$. Choose $0<r<R-2$ so that the circle $C_r=\{\zeta:|\zeta-2|=r\}$ misses $E$. Such circles exist between consecutive small disks. On $D(2,r)$ all the sets $E_s$, $s\ge R$, agree with $E$. The standard convergence theorem for decreasing planar compacts gives
\[
 V_{E_s}^*\uparrow V_{E_R}^*
 \quad\text{locally uniformly on }\C\setminus E_R
 \quad(s\downarrow R).
\]
This follows, for example, from continuity from above of logarithmic capacity, weak convergence of equilibrium measures, and their logarithmic-potential representation away from the limiting compact; see \cite{Levenberg2017,Ransford1995}.
In particular,
\[
 \varepsilon_s:=\sup_{C_r}(V_{E_R}^*-V_{E_s}^*)\longrightarrow0.
\]
The difference is nonnegative and harmonic on $D(2,r)\setminus E$. Its boundary limit is zero at each point of $E\cap D(2,r)$ other than $2$. It is bounded near $2$. The maximum principle with one bounded exceptional boundary point gives
\[
 V_{E_R}^*\le V_{E_s}^*+\varepsilon_s
 \quad\text{on }D(2,r)\setminus E.
\]
One can justify this version of the maximum principle by adding
$\varepsilon\log(|\zeta-2|/r)$ to the difference and then letting $\varepsilon\downarrow0$. On $E\cap D(2,r)\setminus\{2\}$ both Green functions vanish. The inequality therefore extends to $2$ by subharmonic regularization. This proves convergence for $R>2$.

For $R=2$, we have $E_2=\cl\D\cup\{2\}$, so $V_{E_2}^*(2)=\log2$. To control the disks accumulating at $2$, put
\[
 T_J=\cl\D\cup\{2\}\cup\bigcup_{j\ge J}\cl D(p_j,\rho_j),
 \qquad m_J=\sum_{j\ge J}2^{-j}=2^{1-J},
\]
and define
\[
 v_J(\zeta)=
 \frac{\lp|\zeta|+\sum_{j\ge J}2^{-j}\log|\zeta-p_j|-m_J\log4}
 {1+m_J}.
\]
The function $v_J$ belongs to $\calL_1$. On $\cl\D$ the numerator is nonpositive. On a small disk with index $j\ge J$, the $j$th logarithmic term is at most $-16$, the other logarithms are nonpositive, and $\lp|\zeta|<\log4$. Hence $v_J\le0$ on $T_J\setminus\{2\}$. Polar removal gives
\[
 V_{T_J}^*(2)\ge v_J(2)
 =\frac{1-(J+3)m_J}{1+m_J}\log2
 \longrightarrow\log2.
\]
For every $J$, if $s>2$ is sufficiently close to $2$, then $E_s\subset T_J$. Since $V_{E_s}^*(2)\le\log2$, this proves convergence at $R=2$.

Finally, suppose $R<\max_E|\zeta|$. An open part of the first small disk is removed from $E_R$. At a boundary point of that disk with modulus greater than $R$, the function $V_{E_R}^*$ is positive, whereas $V_E^*$ is zero. Thus
\[
 d=V_{E_R}^*-V_E^*
\]
is positive somewhere in $\C\setminus E$. It is nonnegative and harmonic on that connected domain, so it is positive everywhere there. Choose $L>\max_E|\zeta|$. On $|\zeta|=L$, choose $\gamma>0$ such that $d\ge\gamma V_E^*$. The minimum principle on $D(0,L)\setminus E$ gives the same inequality throughout that domain: at each point of $E\setminus\{2\}$, the lower boundary limit of $d-\gamma V_E^*$ is nonnegative, and $2$ is a bounded polar exception. Extending the inequality by subharmonic regularization yields
\[
 V_{E_R}^*(2)\ge(1+\gamma)V_E^*(2)>V_E^*(2),
\]
where the last step uses \eqref{eq:planar-gap}.
\end{proof}

\begin{proof}[Proof of \cref{thm:product-derivative}]
Fix $w$ and set $a=(2,w)$. Dividing the upper bound in \cref{prop:fibre-bounds} by $h=\delta-\tau$ and letting $h\downarrow0$ gives
\[
 D_\tau(a)\le V_{E_w}^*(2).
\]
For $q(w)=0$, equality follows from \cref{prop:fibre-bounds}. Suppose now that $q(w)>0$, and write $R=R_w$. Choose $s>R$ different from every inner tangency radius $p_j-\rho_j$, and put $v_s=V_{E_s}^*$. Then $v_s=0$ on $E_s\setminus\{2\}$: the included pieces of the small disks are nondegenerate lenses or whole disks and are regular.

For $Z=(z_1,\ldots,z_n)$, use the complex linear function
\[
 L(Z)=\frac{2z_1+\sum_{j=1}^{n-1}\overline{w_j}z_{j+1}}
 {\sqrt{4+\norm w^2}}.
\]
We have $|L(Z)|\le\norm Z$, with equality exactly on the complex line through $a$, and $L(a)=\norm a$. The radial construction gives $\Phi(Z)\ge b+\log\norm Z$. Hence
\[
 \tau b+\tau\log|L(Z)|\le\tau\Phi(Z),
\]
with equality at $a$.

On the compact set $\{Z\in K_n:|z_1|\ge s\}$ this inequality has a uniform positive gap. Indeed, a point on the complex line through $a$ has the form $(\zeta,\zeta w/2)$. If it belongs to $K_n$, the polydisc constraint gives $|\zeta|\le R<s$. Thus the compact set misses this line, and $|L(Z)|/\norm Z$ has maximum strictly less than $1$ there. If the compact set is empty, no gap estimate is needed.

Since $0\le v_s(z_1)\le\lp|z_1|$ is bounded for $z_1\in E$, there is $h_s>0$ such that, for $0<h\le h_s$, the function
\[
 u_h(Z)=\tau b+\tau\log|L(Z)|+h v_s(z_1)
\]
is at most $\psi_\tau$ on $K_n\setminus Z_n$. On the part with $|z_1|\le s$ this follows from $v_s=0$; on the rest it follows from the positive gap. Also $u_h\in\calL_{\tau+h}$ and $Z_n$ is pluripolar. The quasi-everywhere envelope formula \eqref{eq:qe-envelope} therefore gives $F_{\tau+h}\ge u_h$ on $\C^n$. Evaluating at $a$ yields
\[
 F_{\tau+h}(a)-\psi_\tau(a)\ge hV_{E_s}^*(2).
\]
It follows that $D_\tau(a)\ge V_{E_s}^*(2)$. Let $s\downarrow R$ through radii avoiding the countably many tangencies. By \cref{lem:green-limit}, the right side tends to $V_{E_R}^*(2)=V_{E_w}^*(2)$. This proves the derivative formula.

If $q(w)\le\rho_*$, then $E_w=E$. If $q(w)>\rho_*$, the strict inequality in \cref{lem:green-limit} gives $V_{E_w}^*(2)>V_E^*(2)$. Finally, $q(w)=1$ gives $E_w=\cl\D\cup\{2\}$; polar removal gives $V_{E_w}^*(2)=\log2$. All assertions follow.
\end{proof}

All alternatives in \cref{thm:main-rigidity} occur. A regular closed ball with zero obstacle has $\Lambda_{\reg}=\Lambda=(0,\infty)$. The compact $K_n$ with zero obstacle has $\Lambda=(0,\infty)$ and empty global regular parameter set, because $V_{K_n}^*>0$ on $Z_n$. The smooth weighted construction realizes the singleton alternative at an arbitrarily prescribed positive threshold. Its points outside $Z_n$ simultaneously realize the full-ray alternative $[\tau,\infty)$.

\section{Globally pluriregular compact sets}\label{sec:global-example}

We now prove \cref{thm:global-example}. Unlike the product example, the compact set here satisfies $V_K^*|_K=0$. The construction uses a variant of Sadullaev's chair \cite[pp.~150--151]{Sadullaev2016}. A ball fixes the initial weighted envelope, and small product sets make the compact non-pluripolar at every point. We begin in $\C^2$ and then pass to higher dimensions.

\subsection{A planar compact set}

For $j\ge1$, set
\[
 q_j=3-2^{-j},\qquad r_j=e^{-16\cdot2^j},\qquad
 I_j=[q_j-r_j,q_j+r_j].
\]
These pairwise disjoint real intervals lie in $(2,3)$ and accumulate only at $3$. Define
\[
 I=\{3\}\cup\bigcup_{j\ge1}I_j,\qquad
 H=\overline\D\cup I.
\]
Both $I$ and $H$ are compact. Although $V_H^*$ vanishes on the unit disk and on every $I_j$, it is positive at $3$.

\begin{lemma}\label{lem:global-planar}
The function $V_H^*$ satisfies
\[
 0\le V_H^*(w)\le\lp|w|,\qquad
 V_H^*=0\text{ on }\overline\D\cup\bigcup_{j\ge1}I_j,\qquad
 V_H^*(3)\ge\frac{8\log3-4\log2}{9}>0.
\]
For $0<r\le1$, put $H_r=\overline D(0,r)\cup I$. Then
\begin{equation}\label{eq:global-shrunk}
 V_H\le V_{H_r}\le V_H+\log(1/r),\qquad
 V_H^*\le V_{H_r}^*\le V_H^*+\log(1/r).
\end{equation}
\end{lemma}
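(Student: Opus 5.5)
The plan is to mirror the proof of \cref{lem:planar}, replacing the small disks by intervals and the point $2$ by $3$.

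\emph{Step 1: elementary bounds.} Since $H\supset\cl\D$, monotonicity gives $V_H\le V_{\cl\D}=\lp|w|$, hence $V_H^*\le\lp|w|$; nonnegativity holds because $0\in\calL_1$.

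\emph{Step 2: vanishing on $\cl\D\cup\bigcup_j I_j$.} Each point of $\cl\D$ is a regular point of $\cl\D$. Each point of $I_j$ is a regular point of the nondegenerate segment $I_j$: segments are regular compacts in the plane, including at their endpoints. Monotonicity of the regularized extremal function in the set then gives $V_H^*=0$ at these points.

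\emph{Step 3: the lower bound at $3$.} Consider the potential
\[
h(w)=\sum_{j\ge1}2^{-j}\log|w-q_j|.
\]
At $w=3$ we have $|3-q_j|=2^{-j}$, so $h(3)=-\sum_j j2^{-j}\log2=-2\log2$. On $I_j$, the $j$th term is at most $2^{-j}\log r_j=-16$. The other terms satisfy $|w-q_k|<1$, since all points of $I$ lie in $(2,3]$, so they are negative. On $\cl\D$ we have $|w-q_k|\le1+3=4$, hence $h\le 2\log 2$ there. Set
\[
v=\frac{8\lp|w|+h(w)-2\log2}{9}\in\calL_1 .
\]
On $\cl\D$ this gives $v\le0$. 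On $I_j$ we have $|w|\le3$, so $9v\le 8\log3-16-2\log2<0$. At $3$,
\[
v(3)=\frac{8\log3-4\log2}{9}.
\]
Polar removal \eqref{eq:polar-removal} of $\{3\}$ then yields
\[
V_H^*(3)=V_{H\setminus\{3\}}^*(3)\ge v(3).
\]
Positivity holds because $3^8>2^4$. The constants here match the stated value, which confirms the choice of coefficients $8$ and $1$. The only point to check carefully is that the series defining $h$ is subharmonic and not identically $-\infty$; this is verified as in \cref{lem:planar}.

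\emph{Step 4: \eqref{eq:global-shrunk}.} The left inequalities follow from $H_r\subset H$. For the right inequalities, take $u\in\calL_1$ with $u\le0$ on $H_r$. By the maximum principle on $\cl\D$, applied through the estimate $u\le\log(|w|/r)$ coming from $V_{\cl D(0,r)}$, we get $u\le\log(1/r)$ on $\cl\D$. Since $u\le0$ on $I$, the function $u-\log(1/r)$ is admissible for $H$. Hence $V_{H_r}\le V_H+\log(1/r)$, and regularizing gives the starred version.

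The main obstacle is choosing the weights in Step 3 so that all the constants produce exactly $\tfrac{8\log3-4\log2}{9}$.
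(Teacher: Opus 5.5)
Your proposal is correct and matches the paper's proof step for step. Your $h$ is the paper's potential $P$ shifted by $-2\log 2$, so your barrier $v$ is exactly the paper's $Q$, and Step~4 is the same argument: bound competitors by $\log^+(|w|/r)\le\log(1/r)$ on $\overline{\mathbb D}$, then subtract $\log(1/r)$. No maximum principle is actually needed there beyond the identity $V_{\overline D(0,r)}=\log^+(|w|/r)$.
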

\begin{proof}
The first upper bound follows from $\overline\D\subset H$. A closed real interval is regular for the planar Green function. Monotonicity with respect to the compact set therefore gives $V_H^*=0$ on each $I_j$; the same argument applies to $\overline\D$.

Consider the logarithmic potential
\[
 P(w)=\sum_{j=1}^{\infty}2^{-j}\log|w-q_j|+2\log2.
\]
It is subharmonic, has logarithmic growth of coefficient $1$, and is not identically $-\infty$, since
\[
 P(3)=-\log2\sum_{j=1}^{\infty}j2^{-j}+2\log2=0.
\]
These facts also follow by viewing the sum as the logarithmic potential of the probability measure $\sum_j2^{-j}\delta_{q_j}$. On $\overline\D$, all the distances $|w-q_j|$ are at most $4$, so $P\le4\log2$. On $I_j$, the $j$th summand is at most $-16$, and all other logarithms are nonpositive. Consequently,
\[
 P\le-16+2\log2\quad\text{on }\bigcup_j I_j.
\]
It follows that
\[
 Q(w)=\frac{8\lp|w|+P(w)-4\log2}{9}\in\calL_1(\C)
\]
satisfies $Q\le0$ on $H\setminus\{3\}$ and $Q(3)=(8\log3-4\log2)/9$. Polar removal in \eqref{eq:polar-removal} gives $V_H^*\ge Q$, proving the lower bound at $3$.

Finally, $V_{H_r}\le\lp(|w|/r)$. Therefore $V_{H_r}\le\log(1/r)$ on $\overline\D$, while $V_{H_r}=0$ on $I$. Subtracting $\log(1/r)$ from each competitor for $V_{H_r}$ produces a competitor for $V_H$. This proves the upper raw inequality in \eqref{eq:global-shrunk}. The lower one follows from $H_r\subset H$, and regularization proves the remaining inequalities.
\end{proof}

\subsection{The compact set}

Use coordinates $(z,w)\in\C^2$, and put
\begin{align}
 A&=\{(z,w):|z|^2+|w|^2\le1\},\notag
\\
 \Gamma&=\{(e^{i\theta},t):-\pi\le\theta\le\pi,
                   \ 3\le t\le4-\cos\theta\},\notag
\\
 B&=(\overline\D\times I)\cup\Gamma,\qquad
 K=A\cup B,\qquad a=(0,3).\label{eq:global-K}
\end{align}
The set $\overline\D\times\{3\}$ is the base disk of the chair. The product pieces $\overline\D\times I_j$ approach that disk from below, while $\Gamma$ lies above it.

\begin{lemma}\label{lem:global-geometry}
The compact set $K$ is polynomially convex, non-pluripolar and non-plurithin at every point, and globally pluriregular.
\end{lemma}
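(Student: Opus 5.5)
The plan is to verify the four properties separately, working in the coordinates of \eqref{eq:global-K}. The one genuinely new point is global pluriregularity at the base disk $\overline\D\times\{3\}$. There I would use the chair mechanism: a slice argument with the maximum principle, fed by the pluriregularity of $\Gamma$.

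\emph{Polynomial convexity.} Let $\pi(z,w)=w$. Then $\pi(A)=\overline\D$ and $\pi(B)\subset[2,5]$. These are disjoint planar compacts whose union is full. By Kallin's separation lemma, $K$ is polynomially convex once $A$ (a ball) and $B$ are. For $B$, the polynomial $w$ is real-valued on $B$. I will use the fibration form of Kallin's lemma: if a polynomial is real on a compact set, its hull is the union of the hulls of its level fibres. It then suffices to check each fibre $B\cap\{w=c\}$, $c\in[3,5]$:
\begin{itemize}
\item for $c\in I$ the fibre is $\overline\D\times\{c\}$, possibly together with points of $\partial\D$ coming from $\Gamma$;
\item otherwise it is $\{e^{i\theta}:4-\cos\theta\ge c\}\times\{c\}$, a closed arc of the unit circle.
\end{itemize}
All of these are full planar compacts. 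The point to double-check is that $w$ being real on $B$ forces $\operatorname{Im}w=0$ on $\widehat B$, so no fibre over a nonreal value appears.

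\emph{Non-pluripolarity, local regularity off the base disk, and non-plurithinness.} At points of $A$ the ball is locally regular. At points of $\overline\D\times I_j$ with $|z|<1$, or in $\overline\D\times I_j$ generally, the set is a product of a closed disk and a closed interval. Both factors are planar regular, so the product formula \eqref{eq:product}-type argument gives local pluriregularity. For $\Gamma$ I will use the local biholomorphism $\Phi_0(\zeta_1,\zeta_2)=(e^{i\zeta_1},\zeta_2)$. It maps the real compact $G=\{(\theta,t)\in\R^2:3\le t\le4-\cos\theta\}$ onto $\Gamma$, with $\theta$ taken periodically. Near each of its points, $G$ is a fat real compact:
\begin{itemize}
\item along the lower edge it is a half-plane piece;
\item at the maximum $(\pi,5)$ of $4-\cos\theta$ it is locally convex, since $4-\cos\theta$ is concave there;
\item elsewhere it satisfies an interior cone condition.
\end{itemize}
Such compacts in $\R^2\subset\C^2$ are locally pluriregular by the Pleśniak--Baran results. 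Local pluriregularity is invariant under local biholomorphisms, through the relative extremal characterization in \cref{sec:preliminaries}. Hence every point of $\Gamma$ is locally pluriregular, and in particular $\Gamma$ is non-pluripolar near each of its points.

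Now consider points of the base disk $(z,3)$ with $|z|<1$. Every neighborhood contains some $\overline\D\times I_j$, so $K$ is non-pluripolar there. Non-plurithinness follows as in \cref{lem:product}: restrict to the complex line $w=3$, which contains the disk $\overline\D\times\{3\}\subset K$, and use the submean inequality. At all other points, local pluriregularity already implies non-plurithinness.

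\emph{Global pluriregularity, the main step.} By the above, $V_K^*=0$ on $K$ except possibly on $D\times\{3\}$. Fix $|z_0|<1$ and consider the subharmonic function $s(\zeta)=V_K^*(\zeta,3)$ on $\D$. Each boundary point $(e^{i\theta},3)$ lies on the lower edge of $\Gamma$, which is pluriregular, so $V_K^*(e^{i\theta},3)=0$. Upper semicontinuity then gives $\limsup_{\zeta\to e^{i\theta}}s(\zeta)\le0$. The maximum principle yields $s\le0$ on $\D$, hence $V_K^*=0$ on all of $K$. I expect the main obstacle to be the rigorous local regularity of $\Gamma$, especially at its edge points and at the tip $(-1,5)$. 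I will handle it via the pullback to the real compact $G$. A cruder alternative on the lower edge is to exhibit small products (arc)$\times[3,3+\varepsilon]$ inside $\Gamma$, which suffices for the chair argument.
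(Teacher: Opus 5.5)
Your plan has the same skeleton as the paper's proof. Polynomial convexity comes from separating $A$ and $B$ with polynomials in $w$ and then fibring $B$ over the real values of $w$; the paper's $p(z)q(w)^k$ construction is just a hands-on proof of the fibration lemma you cite. Global pluriregularity comes from the chair argument, namely the maximum principle for $\zeta\mapsto V_K^*(\zeta,3)$ on $\D$.

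The real difference is how you treat $\Gamma$. The paper never proves local pluriregularity there:
\begin{itemize}
\item it gets $V_K^*=0$ at points with $\theta\ne0$ and $3<t<4-\cos\theta$ from products of a small arc and an interval;
\item it extends this to both edges by non-thinness of the vertical segments $\{e^{i\theta}\}\times[3,4-\cos\theta]$ at their endpoints;
\item it uses the same segments for non-plurithinness.
\end{itemize}
You instead pull $\Gamma$ back by $(\zeta_1,\zeta_2)\mapsto(e^{i\zeta_1},\zeta_2)$ to a fat real compact $G\subset\R^2$ and invoke Baran--Ple\'sniak. That gives the stronger fact that $K$ is locally pluriregular along $\Gamma$, so all three local properties come at once there. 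The cost is heavier external theorems plus biholomorphic invariance of local regularity, whereas the paper needs only the product formula and one-variable non-thinness.

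One case of your analysis of $G$ is wrong. At $(\theta,t)=(0,3)$, which corresponds to the point $(1,3)$ where $\Gamma$ meets the base disk, $G$ is locally $\{0\le t-3\le 1-\cos\theta\}$. This is a two-sided cusp tangent to the $\theta$-axis. It is not a half-plane piece, and it contains no open cone with vertex at that point. By comparison, the tip $(-1,5)$ you were worried about is harmless. Your ``cruder alternative'' (arc)$\times[3,3+\varepsilon]$ fails at the same place, since no such product around $\theta=0$ fits inside $\Gamma$.

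The conclusion at $(1,3)$ is still true. You could invoke the uniformly polynomially cuspidal theory of Paw\l{}ucki--Ple\'sniak, but the paper's fix is simpler:
\begin{itemize}
\item The maximum principle on $\D$ needs $\limsup s\le0$ only at boundary points other than $1$, because $s$ is bounded above and a point is polar.
\item Afterwards, $V_K^*(1,3)=0$ and non-plurithinness at $(1,3)$ both follow from non-thinness of the closed disk $\cl\D\times\{3\}$ at its boundary point $1$.
\item Non-pluripolarity at $(1,3)$ comes from the pieces of $\cl\D\times I_j$.
\end{itemize}
With this patch your proof is complete.
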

\begin{proof}
We first prove polynomial convexity, then the local size properties, and finally global pluriregularity.

\emph{Polynomial convexity.}
The projection of $B$ onto the $w$-coordinate is
$W=I\cup[3,5]\subset\R$. For $t\in W$, its fiber is
\[
 B_t=\begin{cases}
 \overline\D,&t\in I,\\
 \{z:|z|=1,\ \operatorname{Re}z\le4-t\},&3<t\le5.
 \end{cases}
\]
Every fiber is polynomially convex in $\C$: it is a disk, a proper closed circular arc, or a singleton.

Here is a direct proof that $B$ is polynomially convex. A point in its polynomial hull must project into $W$, because a compact subset of the real axis is polynomially convex. If $(z_0,t_0)\notin B$ with $t_0\in W$, choose a polynomial $p$ in $z$ with $p(z_0)=1$ and $\sup_{B_{t_0}}|p|<1$. By compactness, $|p|\le\rho<1$ on fibers with $|t-t_0|$ sufficiently small. Since $W\subset(2,5]$, the polynomial
\[
 q(t)=1-\frac{(t-t_0)^2}{16}
\]
lies in $(0,1]$ on $W$, equals $1$ at $t_0$, and is uniformly less than $1$ away from that neighborhood. For large $k$, $p(z)q(w)^k$ has modulus less than $1$ on $B$ and equals $1$ at $(z_0,t_0)$. This separates the point from $B$.

The ball $A$ is polynomially convex. Its $w$-projection is $\overline\D$, whereas $W\subset(2,5]$. The planar compact $\overline\D\cup W$ has connected complement. By polynomial approximation in one variable, there are polynomials in $w$ tending uniformly to $1$ on either one of these two compact sets and to $0$ on the other. Multiplying a separating polynomial for $A$ or $B$ by such a polynomial separates any point over its corresponding projection. Points outside both projections are separated by a polynomial in $w$. Thus $A\cup B$ is polynomially convex.

\emph{Non-pluripolarity at every point.}
This holds on $A$ and on each $\overline\D\times I_j$, including their boundaries. At a point of $\overline\D\times\{3\}$, every neighborhood contains a product of a nonpolar disk piece and a nondegenerate subinterval of some $I_j$. At a point of $\Gamma$ with $t>3$, every neighborhood contains a product of a nondegenerate circular arc and a real interval. Products of nonpolar planar sets are non-pluripolar. This proves the assertion at every point of $K$.

Non-plurithinness is also direct. A point of $\cl\D\times I$ lies on a complex disk contained in $K$. A point of $\Gamma$ with $t>3$ lies on a nondegenerate real interval in a complex line. Disks and real intervals are non-thin at their endpoints as well as their interior points. Restricting a local plurisubharmonic function to the corresponding line rules out a strict jump. Points of $A$ are non-plurithin because the ball is locally pluriregular.

\emph{Global pluriregularity.}
The function $V_K^*$ is finite, nonnegative, and plurisubharmonic. Since a ball and products of regular planar compact sets are globally pluriregular, monotonicity gives
\[
 V_K^*=0\quad\text{on }A\cup\bigcup_j(\overline\D\times I_j).
\]
For $\theta\ne0$ and $3<t<4-\cos\theta$, there is a product of a small closed circular arc and a real interval, contained in $\Gamma$, that contains $(e^{i\theta},t)$ in the relative interior of both factors. The product formula for extremal functions gives $V_K^*(e^{i\theta},t)=0$.

For fixed $\theta\ne0$, the function $w\mapsto V_K^*(e^{i\theta},w)$ is finite and subharmonic. The non-thinness of a real interval at its endpoints extends this equality to $3\le t\le4-\cos\theta$. In particular,
\[
 V_K^*(e^{i\theta},3)=0\quad(\theta\ne0).
\]
The finite subharmonic function $z\mapsto V_K^*(z,3)$ is bounded near $\overline\D$ and vanishes on its boundary except possibly at $1$. The subharmonic Poisson inequality, or the maximum principle with a bounded exceptional boundary point, gives $V_K^*(z,3)=0$ for $|z|<1$. Non-thinness of the disk at $1$ then gives $V_K^*(1,3)=0$ as well. Hence $V_K^*=0$ on the entire base disk and on all of $K$. The elementary planar regularity facts and the product formula used here can be found in \cite{Ransford1995,Klimek1991}.
\end{proof}

\subsection{The weight and the derivative}

Take $\Phi$ from \cref{lem:smooth} and put $\Psi_\tau=\tau\Phi$ and $\psi_\tau=\Psi_\tau|_K$. The compact set \eqref{eq:global-K} contains the closed unit ball and $a=(0,3)$. Thus \cref{eq:base-identity,eq:radial-contact} give
\[
 V_{\tau,K,\psi_\tau}=F_\tau=\Psi_\tau,
 \qquad \Lambda(K,\psi_\tau)=[\tau,\infty).
\]
The next result proves failure of regularity at every larger parameter and gives the exact initial rate of increase.

\begin{proposition}\label{prop:global-bounds}
For $K$ in \eqref{eq:global-K}, $a=(0,3)$, and
\[
 \kappa_\tau=\frac{\tau\log(26/25)}{2\log5},
\]
the following bounds hold for every $h>0$:
\begin{equation}\label{eq:global-bounds}
 \min\{h,\kappa_\tau\}V_H^*(3)
 \le F_{\tau+h}(a)-\psi_\tau(a)
 \le hV_H^*(3).
\end{equation}
In particular,
\[
 F_{\tau+h}(a)=\psi_\tau(a)+hV_H^*(3)
 \qquad(0\le h\le\kappa_\tau),
\]
and
\[
 D_\tau(a)=V_H^*(3)
 \ge\frac{8\log3-4\log2}{9}>0=V_K^*(a).
\]
\end{proposition}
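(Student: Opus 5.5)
The plan is to prove the two inequalities in \eqref{eq:global-bounds} separately: the upper bound by restricting to the complex line $\{z=0\}$ through $a$, and the lower bound by an explicit competitor built from $V_H^*$ in the $w$-variable. Throughout I use \eqref{eq:base-identity} in the form $F_\tau=\tau\Phi$. Since $\norm a=3>1$, this gives $\psi_\tau(a)=\tau(b+\log3)$.

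\emph{Upper bound.} First I compute the slice $K\cap\{z=0\}$. The ball $A$ contributes $\{|w|\le1\}$, the set $\cl\D\times I$ contributes $\{0\}\times I$, and $\Gamma$ contributes nothing because it requires $|z|=1$. Hence the slice is $\{0\}\times H$. Put $u_h(w)=F_{\tau+h}(0,w)-\tau\Phi(0,w)\ge0$. For $|w|>1$ we have $\tau\Phi(0,w)=\tau(\log|w|+b)$, which is harmonic, so $u_h$ is subharmonic there.

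Next I check where $u_h$ vanishes. Points of $A$ are locally pluriregular, and so are the points $(0,t)$ with $t\in I_j$, since they lie in the regular products $\cl\D\times I_j$. By \cref{prop:local-weights}, $u_h=0$ on $\cl\D\cup\bigcup_jI_j$. The gluing argument from the proof of \cref{prop:fibre-bounds} then makes $u_h$ subharmonic on $\C$ with growth at most $h$, and $u_h\le0$ on $H\setminus\{3\}$. Polar removal \eqref{eq:polar-removal} gives $u_h\le hV_H^*$, and evaluating at $w=3$ yields the upper bound.

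\emph{Lower bound.} Set $h'=\min\{h,\kappa_\tau\}$. The competitor is
\[
 u(z,w)=\tau b+\tau\log|w|+h'V_H^*(w),
\]
which is plurisubharmonic and belongs to $\calL_{\tau+h'}\subset\calL_{\tau+h}$. By \cref{lem:smooth}(ii), $\tau b+\tau\log|w|\le\tau b+\tau\log\norm{(z,w)}\le\tau\Phi(z,w)$ everywhere. I now check $u\le\psi_\tau$ off the pluripolar set $\cl\D\times\{3\}$, piece by piece:
\begin{enumerate}
\item On $A$ and on $\cl\D\times I_j$, we have $V_H^*(w)=0$ by \cref{lem:global-planar}, so the inequality above suffices.
\item On $\Gamma$, we have $w=t\in[3,5]$ and $|z|=1$. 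This gives the gap
\[
 \tau\Phi-\tau b-\tau\log t\ge\tfrac\tau2\log(1+t^{-2})\ge\tfrac\tau2\log(26/25),
\]
while $h'V_H^*(t)\le h'\log5$. The choice $h'\le\kappa_\tau$ is exactly what makes $h'\log5\le\tfrac\tau2\log(26/25)$, so $u\le\psi_\tau$ on $\Gamma$.
\end{enumerate}
The q.e. envelope formula \eqref{eq:qe-envelope} then gives $F_{\tau+h}\ge F_{\tau+h'}\ge u$, and evaluating at $a$ yields $F_{\tau+h}(a)-\psi_\tau(a)\ge h'V_H^*(3)$.

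\emph{Consequences.} For $0\le h\le\kappa_\tau$ the two bounds coincide, which gives the affine formula. Differentiating at $h=0$ gives $D_\tau(a)=V_H^*(3)$. The numerical lower bound comes from \cref{lem:global-planar}, and $V_K^*(a)=0$ comes from \cref{lem:global-geometry}.

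I expect the main obstacle to be the $\Gamma$ estimate. It has to deliver a uniform gap on all of $\Gamma$, including the points with $t=3$ adjacent to the base disk where $V_H^*(3)>0$. It also has to keep the excluded set $\cl\D\times\{3\}$ pluripolar, so that \eqref{eq:qe-envelope} applies.
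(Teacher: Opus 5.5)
Your proposal is correct. The lower bound matches the paper's argument: the same competitor $\tau b+\tau\log|w|+\eta V_H^*(w)$, the same gap $\frac\tau2\log(26/25)$ on $\Gamma$ that fixes $\kappa_\tau$, and the q.e.\ envelope formula \eqref{eq:qe-envelope} applied off the pluripolar base disk.

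Your upper bound takes a different route from the paper.

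\emph{Your route.} You restrict the already regularized function $F_{\tau+h}$ to the single line $\{z=0\}$, whose trace on $K$ is $\{0\}\times H$. You get vanishing of $u_h$ on $H\setminus\{3\}$ from local pluriregularity of the ball and of the products $\cl\D\times I_j$, via \cref{prop:local-weights}. You then glue and apply polar removal exactly as in \cref{prop:fibre-bounds}.

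\emph{The paper's route.} It never regularizes before estimating. It takes an arbitrary unregularized competitor $u$ on every nearby slice $\{z\}\times\C$ with $|z|<1$. There the ball's slice is the disk of radius $r=(1-|z|^2)^{1/2}$. The obstacle inequality, including on the base disk $\{w=3\}$, makes a truncation of $u$ vanish on $H_r$. Then \eqref{eq:global-shrunk} converts $V_{H_r}$ back to $V_H^*$ at the cost of $\delta\log(1/r)$, and this cost disappears in the limsup as $(z,w)\to a$.

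\emph{What each buys.} Your argument is shorter and makes \eqref{eq:global-shrunk} unnecessary. In exchange it relies on local regularity of $K$ along the slice and on \cref{prop:local-weights}. The paper's argument uses only the obstacle inequality. It handles the upper semicontinuous regularization explicitly, through a bound on all competitors that is uniform on a neighbourhood of $a$.
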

\begin{proof}
Fix $h>0$, write $\delta=\tau+h$, and put
\[
 \eta=\min\{h,\kappa_\tau\}.
\]
The function
\begin{equation}\label{eq:global-candidate}
 u_\eta(z,w)=\tau b+\tau\log|w|+\eta V_H^*(w)
\end{equation}
belongs to $\calL_{\tau+\eta}(\C^2)$. We first check its obstacle inequality off the base disk
$N=\overline\D\times\{3\}$.

On $A$ and on $\overline\D\times I_j$ we have $V_H^*(w)=0$. Thus \cref{lem:smooth} gives $u_\eta\le\Psi_\tau$ there. On $\Gamma$, where $|z|=1$ and $3\le w\le5$, we have
\begin{align*}
 \Psi_\tau(z,w)-\tau b-\tau\log w
 &=\frac{\tau}{2}\log(1+w^{-2})\\
 &\ge\frac{\tau}{2}\log(26/25)
 =\kappa_\tau\log5
 \ge\eta V_H^*(w).
\end{align*}
Hence $u_\eta\le\psi_\tau$ on $K\setminus N$.

Since $N$ is pluripolar and $\tau+\eta\le\delta$, the q.e. envelope identity \eqref{eq:qe-envelope} gives $F_\delta\ge u_\eta$ everywhere. Therefore
\[
 F_\delta(a)-\psi_\tau(a)\ge\eta V_H^*(3).
\]
This is the lower bound in \eqref{eq:global-bounds}. In particular, $a$ is irregular at every parameter $\delta>\tau$.

For the upper bound, take an arbitrary admissible $u\in\calL_\delta$, fix $|z|<1$, and set
\[
 r=(1-|z|^2)^{1/2}.
\]
If the restriction $u(z,\cdot)$ is identically $-\infty$, the required bound is immediate. Otherwise it is subharmonic. On $|w|\le r$, $(z,w)\in A$ and $u(z,w)\le\tau b$. For $w\in I\subset(2,3]$,
\[
 u(z,w)\le\tau b+\frac{\tau}{2}\log(|z|^2+w^2)
 \le\tau b+\tau\log(w/r),
\]
where the last inequality follows from
$(|z|^2+w^2)(1-|z|^2)\le w^2$.
Consequently, the function
\[
 v_z(w)=\begin{cases}
 0,&|w|\le r,\\
 \max\{u(z,w)-\tau b-\tau\log(|w|/r),0\},&|w|>r
 \end{cases}
\]
is subharmonic, belongs to $\calL_h(\C)$, and vanishes on $H_r$. The pasting at $|w|=r$ follows from upper semicontinuity and the bound on the disk. Hence $v_z\le hV_{H_r}$. For $|w|>r$, \cref{eq:global-shrunk} yields
\begin{align*}
 u(z,w)
 &\le\tau b+\tau\log(|w|/r)+hV_{H_r}(w)\\
 &\le\tau b+\tau\log|w|+hV_H^*(w)+\delta\log(1/r).
\end{align*}
This bound is independent of $u$. Take the supremum over all competitors and then the limit superior as $(z,w)\to(0,3)$. Since $r\to1$ and $V_H^*$ is upper semicontinuous, we obtain
\[
 F_\delta(a)\le\tau b+\tau\log3+hV_H^*(3)
 =\psi_\tau(a)+hV_H^*(3).
\]
This proves the upper bound in \eqref{eq:global-bounds}. The bounds coincide for $0<h\le\kappa_\tau$, giving the stated exact formula and derivative.
\end{proof}

\begin{proof}[Proof of \cref{thm:global-example}]
In dimension two, \cref{lem:global-geometry} proves all the geometric properties. The initial identity \eqref{eq:base-identity} gives regularity at $\tau$, while \cref{prop:global-bounds} gives failure at $a$ for every $\delta>\tau$. Hence the global regular parameter set is exactly $\{\tau\}$.

For $n>2$, use $K\times\cl\D^{\,n-2}$ and the point $(0,3,0,\ldots,0)$. Products preserve polynomial convexity and non-pluripolarity at every point. Non-plurithinness follows by restricting to the first two coordinates, and the product formula gives global pluriregularity. This product contains the Euclidean unit ball in $\C^n$, so \cref{eq:base-identity,eq:radial-contact} apply to the radial weight in dimension $n$. The candidate in \eqref{eq:global-candidate}, which depends only on $w$, still satisfies the obstacle inequality outside the pluripolar set $w=3$: adding coordinates can only increase $\Phi$. By \eqref{eq:qe-envelope}, it gives the same positive lower bound at $(0,3,0,\ldots,0)$. Dividing by $h$ and applying \cref{prop:sensitivity} gives $D_\tau(a)\ge V_H^*(3)$. This proves the theorem in every dimension.
\end{proof}

At $a=(0,3)$, the form $\ddc\Psi_\tau$ has rank $1$ and its kernel is the $w$-direction. Thus the positive derivative is consistent with \cref{thm:local-strict}. Also $a\in S$: the compact is non-plurithin at $a$, and \cref{prop:local-weights} shows that loss of weighted regularity rules out local pluriregularity there. This gives a point of $S$ on a globally pluriregular compact set.

\end{document}